\newtheorem{theorem}{Theorem}[section]
\newtheorem{lemma}[theorem]{Lemma}
\newtheorem{cor}[theorem]{Corollary}
\newtheorem{prop}[theorem]{Proposition}
\newtheorem{dfn}[theorem]{{Definition}}
\newtheorem*{rmk}{{Remark}}
\numberwithin{equation}{section}
\newcommand {\Z}{\mathbb{Z}} 
\newcommand {\R}{\mathbb{R}} 
\DeclareMathOperator{\id}{id}
\DeclareMathOperator{\vol}{vol}
\DeclareMathOperator{\grad}{grad}
\DeclareMathOperator{\rank}{rank}
\DeclareMathOperator{\tr}{tr}
\begin{document}
\title[Noncompact asymptotically harmonic manifolds]{Noncompact asymptotically harmonic manifolds}
\author{Gerhard Knieper and Norbert Peyerimhoff }
\date{\today}
\address{Faculty of Mathematics,
Ruhr University Bochum, 44780 Bochum, Germany}
\email{gerhard.knieper@rub.de}
\address{Department of Mathematical Sciences, Durham University, Durham DH1 3LE, UK}
\email{norbert.peyerimhoff@dur.ac.uk}
\subjclass{Primary 53C25, Secondary 53C12, 37D20, 53C40}
\keywords{asymptotically harmonic manifolds, Anosov geodesic flows,
  Gromov hyperbolicity, exponential volume growth}


\begin{abstract}
  In this article we consider asymptotically harmonic manifolds which are simply
  connected complete Riemannian manifolds without conjugate points such
  that all horospheres have the same constant mean curvature $h$. We prove
  the following equivalences for asymptotically harmonic manifolds $X$ 
  under the additional assumption that their curvature tensor together with
  its covariant derivative are uniformly bounded: (a) $X$ has rank
  one; (b) $X$ has Anosov geodesic flow; (c) $X$ is Gromov hyperbolic;
  (d) $X$ has purely exponential volume growth with volume entropy equals $h$. 
  This generalizes earlier results by G. Knieper for noncompact harmonic
  manifolds and by A. Zimmer for asymptotically harmonic manifolds
  admitting compact quotients.
\end{abstract}


\maketitle

\tableofcontents

\section{Introduction}

Let $(X,g)$ be a complete simply connected Riemannian manifold without
conjugate points and $SX$ its unit tangent bundle. For $v \in SX$ we
denote by $c_v: \R \to X$ the corresponding geodesic with $c_v'(0) =
v$ and $b_v: X\to \mathbb{R}$, $b_v(q) = \lim_{t \to \infty}
d(c_v(t),q) - t$ be the associated Busemann function.

Let $S_{v,r}$ and $U_{v,r}$ be the orthogonal Jacobi tensors along
$c_v$, defined by $S_{v,r}(0) = U_{v,r}(0) = \id$ and $S_{v,r}(r) = 0$
and $U_{v,r}(-r) = 0$. Note that we have $U_{v,r}(t) =
S_{-v,r}(-t)$. The stable and unstable Jacobi tensors $S_v$ and $U_v$
are defined as the Jacobi tensors along $c_v$ with initial conditions
$S_v(0) = U_v(0) = \id$ and $S_v'(0) = \lim_{r \to \infty}
S_{v,r}'(0)$ and $U_v'(0) = \lim_{r \to \infty} U_{v,r}'(0)$. We
define $U(v) = U_v'(0)$ and $S(v) = S_v'(0)$. For a general introduction
into Jacobi tensors see \cite{Kn1}.

Important for this paper will be the notion of rank which in
nonpositive curvature has been defined in \cite{BBE} as the dimension
of the parallel Jacobi fields along geodesics, and is one of the
central concepts in rigidity theory.  In the case of no conjugate
points it is due to Knieper \cite{Kn2} and generalizes this concept.

\begin{dfn}
  Let $(X,g)$ be a complete simply connected Riemannian manifold
  without conjugate points. For $v \in SX$ let $D(v) = U(v)-S(v)$ and we define
  $$
  \rank(v) = \dim (\ker D(v)) +1
  $$
  and
  $$
  \rank(X) = \min  \{ \rank(v) \mid  v \in SX \}
  $$
\end{dfn}

It is easy to see that the function $v \to \rank(v)$ is invariant
under the geodesic flow. 

As already observed in \cite{Kn2} the notion of $\rank$ is very
important in the study of {\em harmonic manifolds}. After Szabo's
proof \cite{Sz} of the Lichnerowicz conjecture for {\em compact} simply
connected harmonic manifolds, the classification of {\em noncompact}
harmonic manifolds is still wide open, even though there have been
interesting new developments in the last decade (see, e.g.,
\cite{RaSh,Ni,He}). In this paper we consider the more general class of
asymptotically harmonic manifolds, originally introduced by Ledrappier
\cite[Thm 1]{Le} in connection with rigidity of measures related to
the Dirichlet problem (harmonic measure) and the dynamics of the
geodesic flow (Bowen-Margulis measure).

\begin{dfn}
  An {\em asymptotically harmonic manifold} $(X,g)$ is a complete,
  simply connected Riemannian manifold without conjugate
  points such that for all $v \in SX$ we have $\tr U(v) = h$ for a
  constant $h \ge 0$.
\end{dfn}

Our first main result is the following:

\begin{theorem}\label{thm:Dconstant} Let $(X,g)$ be an asymptotically
  harmonic manifold such that $\vert R \Vert \le R_0$
  and $\vert \nabla R \Vert \le R_0'$ with suitable constants
  $R_0,R_0' > 0$. Then $v \mapsto \det D(v)$ is a constant function
  on $SX$. 
  
  Moreover, if $X$ has rank one, there exists $\rho > 0$ such that 
  $D(v) \ge \rho \cdot \id$ for all $v \in SX$.
\end{theorem}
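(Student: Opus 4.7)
From the relation $U_{v,r}(t) = S_{-v,r}(-t)$ one reads off $S(v) = -U(-v)$, so the hypothesis $\tr U \equiv h$ yields $\tr S \equiv -h$ on $SX$. Both $U(v)$ and $S(v)$ are symmetric endomorphisms of $v^\perp$, and the no-conjugate-point hypothesis gives $D(v) \ge 0$ via the classical Riccati comparison.

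\textbf{Flow-invariance of $\det D$.} Along $c_v$, the translation-invariance of the stable/unstable tensors identifies the Riccati operators $\mathcal{U}(t) := U_v'(t)U_v(t)^{-1}$ and $\mathcal{S}(t) := S_v'(t)S_v(t)^{-1}$ with $U(c_v'(t))$ and $S(c_v'(t))$ respectively, and both satisfy $X'+X^2+R(t)=0$. Hence $\mathcal{D}(t) := D(c_v'(t))$ obeys the linear equation
\[ \mathcal{D}' = \mathcal{S}^2 - \mathcal{U}^2 = -(\mathcal{U}\mathcal{D}+\mathcal{D}\mathcal{S}). \]
Factoring $\mathcal{D}(t) = A(t)\,D(v)\,B(t)$ with $A' = -\mathcal{U}A$, $B' = -B\mathcal{S}$, and $A(0)=B(0)=\id$, Jacobi's formula together with $\tr\mathcal{U}=h$ and $\tr\mathcal{S}=-h$ gives $\det A(t) = e^{-ht}$ and $\det B(t)=e^{ht}$. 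Therefore $\det D(c_v'(t)) = \det A(t)\cdot\det D(v)\cdot\det B(t) = \det D(v)$, so $\det D$ is constant along every geodesic flow orbit, including the degenerate case where $D(v)$ is singular.

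\textbf{From orbit-invariance to constancy on $SX$.} The step from flow-invariance to genuine constancy on $SX$ is what I expect to be the main obstacle, since the geodesic flow is not a priori transitive on $SX$. This is where the hypotheses $\Vert R\Vert\le R_0$ and $\Vert\nabla R\Vert\le R_0'$ enter: standard ODE dependence on parameters, together with uniform control of the boundary-value tensors $U_{v,r}, S_{v,r}$ as $r\to\infty$, yields Lipschitz regularity of $v\mapsto U(v), S(v)$ on $SX$. My plan would be to compute the first variation of $D(v)$ in a direction $\xi\in T_vSX$ transverse to the flow — this variation being governed by a linearized Jacobi equation whose inhomogeneity involves $\nabla_\xi R$ integrated along the forward/backward orbit — and to show that its contribution to the first variation of $\det D$ vanishes using the additional constraints forced by asymptotic harmonicity (notably the pointwise identity $\tr U^2 = -\tr R$ obtained by differentiating $\tr\mathcal{U}\equiv h$ along the flow, and its transverse analogue derived from differentiating the same relation at a neighboring vector). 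Combined with the flow-invariance above, this should force $\det D$ to be locally constant on $SX$, hence globally constant by connectedness. I expect this transverse variation argument to be the technical heart of the proof of the first assertion.

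\textbf{The rank-one lower bound.} Suppose $X$ has rank one: there exists $v_0\in SX$ with $\ker D(v_0)=0$, so $\det D(v_0)>0$. By the previous part, $\det D\equiv c>0$ on all of $SX$, and in particular every $D(v)$ is of full rank $n-1$. Riccati comparison using $\Vert R\Vert\le R_0$ gives a uniform eigenvalue bound $\Vert U(v)\Vert, \Vert S(v)\Vert \le \sqrt{R_0}$ (any eigenvalue exceeding $\sqrt{R_0}$ would generate a finite-time singularity of the corresponding Riccati solution in backward time, contradicting the absence of conjugate points), hence $\Vert D(v)\Vert \le 2\sqrt{R_0}$. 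Since $D(v)\ge 0$ is symmetric with $\det D(v)=c$ and operator norm at most $C_0:=2\sqrt{R_0}$, its smallest eigenvalue is bounded below by $\rho := c/C_0^{n-2}>0$, which gives $D(v)\ge\rho\cdot\id$ as required.
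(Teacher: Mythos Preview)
Your flow-invariance argument and your rank-one lower bound are both correct and essentially identical to the paper's (the paper writes $D'=HD+DH$ with $H=-\tfrac12(U+S)$ and then $\tfrac{d}{dt}\log\det D=2\tr H=0$, which is just a symmetric rewriting of your factorisation $\mathcal D=A\,D(v)\,B$).

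The genuine gap is in your transverse step. What you need is $\partial_\xi\log\det D=\tr\bigl(D^{-1}\,\partial_\xi D\bigr)=0$, but the only pointwise constraint asymptotic harmonicity gives in a transverse direction is $\tr(\partial_\xi U)=\tr(\partial_\xi S)=0$, i.e.\ $\tr(\partial_\xi D)=0$. These are not the same thing: the extra factor $D^{-1}$ does not disappear, and neither the identity $\tr U^2=-\tr R$ nor its $\xi$-derivative produces a relation of the required shape. So the proposed ``transverse variation vanishes by trace identities'' mechanism does not close, and I see no way to make it close along those lines.

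The paper's route past this obstacle is quite different and is worth knowing. It does \emph{not} try to show the transverse derivative of $\det D$ vanishes. Instead it shows $\det D$ is constant along each stable leaf $W^s(v)$ by a dynamical comparison: for $v,\widetilde v\in W^s(v)$ one has $\det D(v)-\det D(\widetilde v)=\det D(\phi^t v)-\det D(\phi^t\widetilde v)$ by flow-invariance, and $\phi^t v,\phi^t\widetilde v$ converge exponentially as $t\to\infty$. The curvature bounds $\|R\|\le R_0$, $\|\nabla R\|\le R_0'$ enter precisely to turn this convergence into $\|D_r(\phi^t v)-D_r(\phi^t\widetilde v)\|\to 0$ via an explicit Hopf-type integral formula for $S_{v,r}'(0)-S_{\widetilde v,r}'(0)$ in terms of $\partial_s R_{\gamma(s)}$ along a horospherical curve, together with exponential decay estimates $\|S_{v,r}(t)\|\le a_2 e^{-\rho t/2}$ that require a \emph{local} lower bound $D\ge\rho\,\id$ near the orbit. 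This local bound is bootstrapped from continuity of $\det D$ at a point where it is nonzero. Constancy along $W^u$ follows by the flip $v\mapsto-v$. Finally, the paper builds a continuous injection $\varphi_v:X\times B_\delta(0)\to SX$ out of the center-stable and unstable foliations and invokes Brouwer's invariance of domain to conclude that the level set $\{w:\det D(w)=\alpha\}$ is open as well as closed, hence all of $SX$.

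In short: your outline identifies the right structure (flow-invariance is easy, the transverse step is the heart, the rank-one bound is soft), but the specific transverse mechanism you propose does not work; the actual argument is a propagation along stable/unstable leaves using delicate horospherical second-fundamental-form estimates that genuinely need the $\nabla R$ bound, followed by a topological open--closed argument.
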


For harmonic manifolds, this theorem is a consequence of the relation between
$\det D(v)$ and the volume density function (see
\cite[Cor. 2.5]{Kn2}). For asymptotically harmonic manifolds this theorem
was proved in \cite[Cor. 2.1]{HKS} under the additional condition of
strictly negative curvature bounded away from zero.  Zimmer
\cite[Proof of Prop. 3.3]{Zi} provides a proof under the additional
assumption of the existence of a compact quotient, using dynamical
arguments. The proof of the general case without negative curvature or
compact quotient requires new subtle estimates for second fundamental
forms of spheres and horospheres which are presented in Section 2 of
this article.

For the next result about asymptotic geometric and dynamical properties equivalent 
to the rank one condition we first need to introduce the notion of volume entropy.

\begin{dfn}
  The {\em volume entropy} $h_{vol}(X)$ of a connected Riemannian
  manifold $X$ is defined as
   \begin{equation} \label{eq:hvoldef} 
   h_{vol}(X) =  \limsup_{r \to \infty} \frac{\log \vol B_r(p)}{r}, 
   \end{equation}
   where $B_r(p) \subset X$ is the open ball of radius $r$ around $p \in X$.
 \end{dfn}
 
 Note that \eqref{eq:hvoldef} does not depend on the choice of reference point $p$ and 
 $h_{vol}(X)$ is therefore well defined.  
 
Theorem \ref{thm:Dconstant} is essential in the proof of our second main result.

\begin{theorem} \label{thm:equivalences} Let $(X,g)$ be an
  asymptotically harmonic manifold such that $\vert R \Vert \le R_0$
  and $\vert \nabla R \Vert \le R_0'$ with suitable constants
  $R_0,R_0' > 0$. Let $h \ge 0$ be the mean curvature of its
  horospheres, i.e. $h = \tr U(v)$. Then the following properties are
  equivalent.
  \begin{itemize}
  \item[(a)] $X$ has rank one.
  \item[(b)] $X$ has Anosov geodesic flow $\phi^t: SX \to SX$.
  \item[(c)] $X$ is Gromov hyperbolic.
  \item[(d)] $X$ has purely exponential volume growth with growth rate
    $h_{vol} = h$.
  \end{itemize}
\end{theorem}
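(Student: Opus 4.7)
The strategy is to establish the cyclic chain $(a) \Rightarrow (b) \Rightarrow (c) \Rightarrow (d) \Rightarrow (a)$. Theorem \ref{thm:Dconstant} is the decisive ingredient throughout: in the rank one case it supplies the uniform lower bound $D(v) \ge \rho \cdot \id$ on $SX$, while in general it tells us that $v \mapsto \det D(v)$ is constant on $SX$. A useful auxiliary observation is that the constancy $\tr U(v) = h$ integrates along the geodesic flow to give $\det U_v(t) = e^{ht}$ (and analogously $\det S_v(t) = e^{-ht}$), so $U_v(t)$ is everywhere nondegenerate.

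For $(a) \Rightarrow (b)$, the uniform bound $D(v) \ge \rho \cdot \id$ together with the curvature hypotheses drives a Riccati-equation analysis giving uniform exponential contraction of the stable tensor $S_v(t)$ and expansion of the unstable tensor $U_v(t)$, which is exactly the hyperbolic splitting required for $\phi^t$ to be Anosov. For $(b) \Rightarrow (c)$, the uniform exponential divergence of neighbouring geodesics supplied by the Anosov property, combined with bounded geometry and the absence of conjugate points, yields the uniform visibility axiom in Eberlein's sense and hence Gromov hyperbolicity of $X$. For $(c) \Rightarrow (d)$, the Jacobi tensor $A_v$ with $A_v(0) = 0$ and $A_v'(0) = \id$ (so that $\det A_v(t)$ is the exponential-map volume density) satisfies $A_v(t) D(v) = U_v(t) - S_v(t)$, so in the rank one case $\det A_v(t) = \det(U_v(t) - S_v(t))/\det D(v)$ with constant denominator by Theorem \ref{thm:Dconstant}. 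Since $\det U_v(t) = e^{ht}$ while $S_v(t) \to 0$ uniformly, one obtains the pointwise asymptotics $\det A_v(t) \sim c \, e^{ht}$; Gromov hyperbolicity then lets us compare metric balls with horoballs and upgrade this pointwise estimate to the two-sided bound $c_1 e^{hr} \le \vol B_r(p) \le c_2 e^{hr}$, i.e.\ purely exponential growth with $h_{vol} = h$.

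For $(d) \Rightarrow (a)$, I argue by contraposition. If $\rank(X) \ge 2$, then $\det D(v) = 0$ at some $v$, and Theorem \ref{thm:Dconstant} forces $\det D \equiv 0$ on $SX$. At every $v$ we therefore find a nonzero $w$ with $D(v) w = 0$; the Jacobi field $t \mapsto (U_v(t) - S_v(t)) w$ then vanishes at $t=0$ together with its derivative and hence vanishes identically, so $U_v(t) w = S_v(t) w$ is a uniformly bounded (simultaneously asymptotically stable and unstable) Jacobi field along every geodesic. A Jacobi-comparison argument exploiting this universal degeneracy yields $\det A_v(t) = o(e^{ht})$ pointwise in $v$, and integration over $S_pX$ produces $\vol B_r(p) = o(e^{hr})$, contradicting purely exponential growth with rate $h$. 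I expect $(c) \Rightarrow (d)$ to be the main obstacle: without a cocompact isometric action ergodic theory is unavailable, so upgrading the sharp exponential rate to a uniform two-sided comparison $\vol B_r(p) \asymp e^{hr}$ must be done by hand via explicit Gromov-hyperbolic ball-versus-horoball comparisons together with the horospherical identity $\tr U(v) = h$, and a quantitative version of the last step shares this difficulty.
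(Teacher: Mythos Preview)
Your cyclic scheme matches the paper's, and the steps $(a) \Rightarrow (b)$ and $(b) \Rightarrow (c)$ are essentially as in the paper (the paper quotes Bolton for the first and the Bridson--Haefliger divergence criterion for the second, while you go via Eberlein's visibility axiom; both routes are fine).

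The real gap is in $(c) \Rightarrow (d)$. Your argument there invokes two facts that are \emph{equivalent to rank one} and hence not yet available from Gromov hyperbolicity alone: the identity $\det A_v(t) = \det(U_v(t)-S_v(t))/\det D(v)$ is only useful when $\det D(v) \ne 0$, and the claimed uniform decay $S_v(t) \to 0$ is precisely the content of Proposition~\ref{prop:SUest}(b), which requires the rank one bound $D \ge \rho\, \id$. Since $(a)$ has not been established at this point of the chain, the argument is circular. The paper avoids this by working instead with
\[
\frac{\det A_v(r)}{e^{hr}} \;=\; \frac{1}{\det\bigl(U(v) - S_{v,r}'(0)\bigr)},
\]
where $U(v) - S_{v,r}'(0) > D(v) \ge 0$ is positive definite regardless of rank. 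Monotonicity of $r \mapsto S_{v,r}'(0)$ immediately gives the lower bound $\vol S_r(p) \ge C_1(p)\, e^{hr}$ (hence $h \le h_{vol}$). For the upper bound the paper uses only Gromov hyperbolicity: a cone-in-horoball comparison (Proposition~\ref{prop:coneinhoroball}) confines metric balls in finitely many horospherical tubes, each of volume $\asymp e^{hr}$ because $\Delta b_v = h$, yielding $\vol B_r(p) \le C_2(p)\, e^{hr}$ without ever assuming $\det D \ne 0$.

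For $(d) \Rightarrow (a)$ your contrapositive strategy is correct, but the passage from the pointwise $\det A_v(t) = o(e^{ht})$ to the integrated $\vol B_r(p) = o(e^{hr})$ is not automatic. The paper handles this with the same formula: if $\det D \equiv 0$ then $\det(U(v) - S_{v,t}'(0)) \searrow 0$ monotonically and continuously in $v$, so Dini's theorem upgrades this to uniform convergence on $S_pX$; hence $\int_{S_pX} 1/\det(U(v)-S_{v,t}'(0)) \, d\theta_p(v) \to \infty$, contradicting the upper bound $\vol B_r(p) \le C\, e^{hr}$.
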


This equivalence has been obtained in the case of {\em noncompact
  harmonic manifolds} by Knieper in \cite{Kn2}. In the case that $(X,g)$ is an asymptotically
harmonic manifold {\em with compact quotient}, this equivalence has
been derived by Zimmer \cite{Zi}. Since for harmonic manifolds the
curvature tensor and its covariant derivative are bounded (\cite[Props. 6.57 and 6.68]{Be}), the current 
article generalizes these results in both papers to asymptotically harmonic manifolds (without a compact
quotient condition).

In a subsequent article \cite{KnPe} we use the main results of this article to derive results about harmonic functions 
(solution of the Dirichlet problem at infinity and mean value property of harmonic functions at infinity) 
on rank one asymptotically harmonic manifolds.

\section{Manifolds without conjugate points: general results}

In this section, $(X,g)$ always denotes a complete simply connected
Riemannian manifold without conjugate points. Let $\pi: SX \to X$ be
the footpoint projection and $v \in SX$. The associated curvature
tensor $R_v(t): \phi^t(v)^\bot \to \phi^t(v)^\bot$ along $c_v$ is
defined by
$$ R_v(t)w = R(w,\phi^t(v))\phi^t(v). $$
The stable and unstable manifolds through $v \in SX$ are defined as
$W^s(v) = \{ - \grad b_v(q) \mid b_v(q) = 0 \}$ and $W^u(v) = \{ \grad
b_{-v}(q) \mid b_{-v}(q) = 0 \}$. The footpoint projections $\pi
W^s(v)$ and $\pi W^u(v)$ are level sets of Busemann functions and,
therefore, horospheres. Horospheres are usually denoted by $\mathcal
H$. Observe that $S(v)$ and $U(v)$ are the associated second
fundamental forms.

\subsection{A formula for the difference of second fundamental
  forms in horospheres}
 
Of importance is the following result which is based on an formula of E.
Hopf \cite[(7.2)]{Ho} for surfaces.

\begin{prop} \label{prop:Sdiff-formula} Let $\gamma: [0,1] \to W^s(v)$
  be a smooth curve with $\gamma(0) = v$ and $\gamma(1) = \widetilde
  v$. Let $e_1(s),\dots,e_{n-1}(s)$ be an orthonormal frame in
  ${\mathcal H} = \pi W^s(v)$ along $\beta = \pi \gamma$ which is
  parallel in ${\mathcal H}$ with the induced connection. Let
  $e_i(s,t)$ be the parallel translation along the geodesic
  $c_{\gamma(s)}$. Then we have
  \begin{equation} \label{eq:Stvr'-Svr'} 
  S_{\widetilde v,r}'(0) - S_{v,r}'(0) = \int_0^1 
  \int_0^r S_{\gamma(s),r}^{\ast}(t) \left( \frac{\partial}{\partial s} 
  R_{\gamma(s)}(t) \right) S_{\gamma(s),r}(t)dt\; ds 
  \end{equation}
  and
  \begin{equation} \label{eq:Utvr'-Uvr'} 
  U_{\widetilde v,r}'(0) - U_{v,r}'(0) = - \int_0^1 
  \int_{-r}^0 U_{\gamma(s),r}^{\ast}(t) \left( \frac{\partial}{\partial
      s} R_{\gamma(s)}(t) \right) U_{\gamma(s),r}(t)dt\; ds,
  \end{equation} 
  where all tensors are expressed with respect to the frame
  $e_1(s,t),\dots,e_{n-1}(s,t)$.
\end{prop}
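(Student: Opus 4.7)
My plan is to fix $s\in[0,1]$, represent $A(s,t)=S_{\gamma(s),r}(t)$ as an $(n-1)\times(n-1)$ matrix in the orthonormal parallel frame $e_1(s,t),\dots,e_{n-1}(s,t)$ along $c_{\gamma(s)}$, and then derive \eqref{eq:Stvr'-Svr'} by a Lagrange--Green/Wronskian-type computation on the matrix Jacobi equation. In this frame $A$ satisfies $A''+R_{\gamma(s)}(t)A=0$ with $A(s,0)=\id$ and $A(s,r)=0$ holding as literal matrix identities for every $s$; the $s$-independence at $t=0$ is exactly what the parallel-in-horosphere choice of $e_i(s)$ and its parallel translation along $c_{\gamma(s)}$ is designed to guarantee.

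Setting $B(s,t)=\partial A/\partial s$ and differentiating the Jacobi equation together with its boundary data in $s$ yields
$$ B''+R_{\gamma(s)}(t)B=-\bigl(\partial_s R_{\gamma(s)}(t)\bigr)A,\qquad B(s,0)=B(s,r)=0. $$
The symmetry of $R_{\gamma(s)}(t)$ (from the symmetries of the Riemann tensor) then gives $A^{\ast}R=(RA)^{\ast}=-(A^{\ast})''$, hence the total-derivative identity
$$ \frac{d}{dt}\bigl(A^{\ast}B'-(A^{\ast})'B\bigr)=A^{\ast}B''-(A^{\ast})''B=-A^{\ast}\bigl(\partial_s R_{\gamma(s)}(t)\bigr)A. $$
Integrating over $[0,r]$, the bracket vanishes at $t=r$ (where $A=B=0$) and collapses to $-B'(s,0)$ at $t=0$ (where $A=\id$, $B=0$); rearranging and then integrating in $s\in[0,1]$ yields \eqref{eq:Stvr'-Svr'}.

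The unstable identity \eqref{eq:Utvr'-Uvr'} follows from the identical argument applied to $A(s,t)=U_{\gamma(s),r}(t)$ on the interval $[-r,0]$; the overall minus sign in front of the double integral appears because $t=0$ is now the upper (rather than lower) limit, so the evaluated Lagrange--Green bracket reduces to $+B'(s,0)$ rather than $-B'(s,0)$. The main obstacle is bookkeeping rather than analysis: I need to verify carefully that the frame $e_i(s,t)$ is jointly smooth in $(s,t)$ and orthonormal in $\phi^{t}(\gamma(s))^{\bot}$ across the whole variation, and that the parallel-in-horosphere convention on $e_i(s)$ is precisely what guarantees that $A(s,0)=\id$ is $s$-independent in matrix form (so that $B(s,0)=0$, without which a spurious boundary contribution from $(A^{\ast})'(0)B(0)$ would spoil the evaluation at $t=0$). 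Once this consistency check is in place, the rest is a formal identity for the matrix Jacobi operator.
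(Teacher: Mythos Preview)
Your proof is correct and follows essentially the same Wronskian argument as the paper: set $Z=\partial_s S_{\gamma(s),r}$, derive the inhomogeneous Jacobi equation $Z''+RZ=-(\partial_s R)S$, differentiate the Wronskian of $S$ and $Z$ in $t$, and integrate first over $[0,r]$ and then over $s\in[0,1]$; the paper uses $(Z^{*})'S-Z^{*}S'$ while you use its transpose $A^{*}B'-(A^{*})'B$, which is immaterial. One minor remark on your stated ``main obstacle'': since $S_{\gamma(s),r}(0)$ is the identity \emph{operator} on $\gamma(s)^{\perp}$, its matrix in \emph{any} orthonormal frame $e_i(s)$ is the identity matrix, so $B(s,0)=0$ holds automatically and does not rely on the parallel-in-horosphere hypothesis---that hypothesis is used only later in the paper, in the estimate of $\Vert\partial_s R_{\gamma(s)}(t)\Vert$ (Lemma~\ref{lem:dsei}).
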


\begin{proof}
  We only prove \eqref{eq:Stvr'-Svr'}, the second identity is proved
  analogously. We start with the Jacobi equation
  $$
  S_{ \gamma(s),r}''(t)+ R_{ \gamma(s)}(t) S_{ \gamma(s),r}(t) =0
  $$
  and define
  $$
  Z_{ \gamma(s),r}(t) = \frac{\partial}{\partial s}S_{\gamma(s),r}(t).
  $$
  Then we have
  \begin{eqnarray*}
    Z_{\gamma(s),r}''(t) &=&\frac{\partial}{\partial s} 
    \frac{\partial^2}{\partial^2 t} S_{\gamma(s),r}(t) =
    -\frac{\partial}{\partial s} \left(R_{\gamma(s)}(t) S_{\gamma(s),r}(t)\right) \\
    &=& - \left(\frac{\partial}{\partial s} R_{\gamma(s)}(t) \right) 
    S_{\gamma(s),r}(t) - R_{\gamma(s)}(t) \left(\frac{\partial}{\partial s} 
      S_{\gamma(s),r}(t) \right), 
  \end{eqnarray*}
  and therefore,
  $$
  Z_{\gamma(s),r}''(t) = - R_{\gamma(s)}(t) Z_{\gamma(s),r}(t) -
  \left( \frac{\partial}{\partial s} R_{\gamma(s)}(t) \right) S_{
    \gamma(s),r}(t).
  $$
  Differentiating the Wronskian of $Z_{\gamma(s),r}$ and
  $S_{\gamma(s),r}$, we obtain
  \begin{multline*}
  \frac{\partial}{\partial t} \left( (Z^{\ast}_{\gamma(s),r})'(t)
    S_{\gamma(s),r}(t) - Z^{\ast}_{\gamma(s),r}(t) S'_{\gamma(s),r}(t) \right) =\\
  (Z^{\ast}_{\gamma(s),r})''(t) S_{\gamma(s),r}(t) -
  Z^{\ast}_{\gamma(s),r}(t) S_{\gamma(s),r}''(t) = \\
  -Z^{\ast}_{ \gamma(s),r}(t) R_{ \gamma(s)}(t) S_{ \gamma(s),r}(t) -
  S^{\ast}_{ \gamma(s),r}(t) \left( \frac{\partial}{\partial s} R_{
      \gamma(s)}(t) \right) S_{ \gamma(s),r}(t) \\ + Z^{\ast}_{
    \gamma(s),r}(t) R_{ \gamma(s)}(t) S_{ \gamma(s),r}(t) = -
  S^{\ast}_{ \gamma(s),r}(t) \left( \frac{\partial}{\partial s} R_{
      \gamma(s)}(t) \right) S_{ \gamma(s),r}(t).
  \end{multline*}
  Integration with respect to $t$ from $0$ to $r$ yields
  $$
  \frac{\partial}{\partial s} S'_{\gamma(s),r}(0) = (Z^{\ast}_{
    \gamma(s),r})'(0) = \int_0^r S^{\ast}_{\gamma(s),r}(t) \left(
    \frac{\partial}{\partial s} R_{ \gamma(s)}(t) \right)
  S_{\gamma(s),r}(t) dt.
  $$
  Integration with respect to $s$ from $0$ to $1$ leads finally to
  $$
  S'_{\widetilde v,r}(0) - S'_{v,r}(0) = \int_0^1 \int_0^r
  S^{\ast}_{\gamma(s),r}(t) \left( \frac{\partial}{\partial s}
    R_{\gamma(s)}(t) \right) S_{\gamma(s),r}(t) dt \; ds,
  $$
  proving \eqref{eq:Stvr'-Svr'} after interchanging the integrals.
\end{proof}

In order to make use of the formulas in Proposition
\ref{prop:Sdiff-formula}, we need to have estimates for $\Vert
S_{\gamma(s),r} \Vert$, $\Vert U_{\gamma(s),r} \Vert$ and $\Vert
\frac{\partial}{\partial s} R_{\gamma(s)}(t) \Vert$. These estimates are
derived in the following two subsections.

\subsection{Estimates for $\Vert S_{v,r} \Vert$ and $\Vert U_{v,r} \Vert$}
We recall the following facts from \cite[Chapter 1, Cor. 2.12 and Lemma 2.17]{Kn1}
(choosing $r = \infty$ there):

\begin{lemma} \label{lem:AAm1} Assume that there exists a constant $R_0
  > 0$ such that $-R_0 \id \le R_v(t)$ for all $v \in SX$ and $t \in
  \R$. Let $A_v$ be the orthogonal Jacobi tensor along $c_v$ with
  $A_v(0) = 0$ and $A_v'(0) = \id$. Then we have
  $$ - \sqrt{R_0} \le A_v'(t) A_v^{-1}(t) \le \sqrt{R_0} \coth (t \sqrt{R_0}) $$
  for all $t > 0$. Furthermore, we have
  $$ -\sqrt{R_0} \le S_v'(0) \le U_v'(0) \le \sqrt{R_0} $$
  for all $v \in SX$.
\end{lemma}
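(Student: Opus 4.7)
The proof plan exploits the matrix Riccati equation satisfied by $U(t) := A_v'(t) A_v^{-1}(t)$, which is well-defined for $t > 0$ because the no-conjugate-points hypothesis keeps $A_v(t)$ invertible there. From $A_v'' + R_v A_v = 0$ one obtains
$$U' + U^2 + R_v = 0, \qquad t > 0,$$
with the initial singularity $U(t) = t^{-1}\id + O(t)$ as $t\to 0^+$; moreover, $U(t)$ is symmetric because the Wronskian $A_v^\ast A_v' - (A_v')^\ast A_v$ vanishes at $0$ and is conserved. For the upper bound I would project onto parallel directions: for any unit parallel vector field $e(t)$ along $c_v$, set $f(t) := \langle U(t) e(t), e(t)\rangle$. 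Using symmetry of $U$ and Cauchy--Schwarz, $\langle U^2 e, e\rangle \ge f^2$, so the Riccati equation together with the hypothesis $R_v \ge -R_0 \id$ yields $f'(t) \le -f(t)^2 + R_0$, while $f(t) \sim t^{-1}$ as $t\to 0^+$. Scalar comparison with $u_0(t) := \sqrt{R_0}\coth(t\sqrt{R_0})$, the solution of $u'=-u^2+R_0$ with the same singularity at $0$, gives $f(t) \le u_0(t)$ for every such $e$, hence $U(t) \le \sqrt{R_0}\coth(t\sqrt{R_0}) \id$.

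For the lower bound the no-conjugate-points hypothesis enters essentially, and I would argue by contradiction. Suppose $U(t_0)$ admits an eigenvalue strictly less than $-\sqrt{R_0}$ at some $t_0 > 0$, with unit eigenvector $e_0$; parallel-translate $e_0$ along $c_v$ to $e(t)$ and again consider $f(t) := \langle U(t) e(t), e(t)\rangle$. The same computation gives $f'(t) \le -f(t)^2 + R_0$, but now with $f(t_0) < -\sqrt{R_0}$. Scalar comparison with $y' = -y^2 + R_0$, whose solutions starting below the (repelling) equilibrium $-\sqrt{R_0}$ blow down to $-\infty$ in finite time, forces $f(t) \to -\infty$ at some finite $t_1 > t_0$. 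This can only occur if $A_v(t_1)$ is singular, i.e.\ $c_v(t_1)$ is conjugate to $c_v(0)$, contradicting the hypothesis. Therefore $-\sqrt{R_0}\id \le U(t)$ for all $t > 0$.

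The bounds on $S_v'(0)$ and $U_v'(0)$ reduce to the estimates above via time reversal. Setting $w := -\phi^r(v)$ and identifying orthogonal parallel frames along the reversed geodesic in the natural way, the boundary conditions give $S_{v,r}(t) = A_w(r-t) A_w(r)^{-1}$, whence $S_{v,r}'(0) = -A_w'(r) A_w(r)^{-1}$; an analogous time-shift identifies $U_{v,r}'(0)$ with $A_{w'}'(r) A_{w'}(r)^{-1}$ for $w' := \phi^{-r}(v)$. Applying the preceding bounds at time $r$ and letting $r \to \infty$—which is the very definition of the stable and unstable tensors—delivers $-\sqrt{R_0} \le S_v'(0), U_v'(0) \le \sqrt{R_0}$. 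The remaining inequality $S_v'(0) \le U_v'(0)$ follows from the standard Riccati monotonicities $r \mapsto S_{v,r}'(0)$ nondecreasing and $r \mapsto U_{v,r}'(0)$ nonincreasing, combined with $S_{v,r}'(0) \le U_{v,r}'(0)$ at every finite $r$. The main technical hurdle is the lower bound: one has to translate finite-time blow-down of the scalar Riccati comparison into a genuine singularity of the tensor $A_v$, and handle the matrix setting by projecting onto parallel-translated eigendirections rather than attempting a direct comparison on the operator $U$ itself.
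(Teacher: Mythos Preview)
The paper does not actually prove this lemma; it is quoted without proof from \cite[Chapter~1, Cor.~2.12 and Lemma~2.17]{Kn1}. Your Riccati comparison argument is the standard route to these estimates and is correct: projecting the matrix Riccati equation onto a parallel unit vector gives the scalar inequality $f' \le -f^2 + R_0$, from which the upper bound follows by comparison with $\sqrt{R_0}\coth(t\sqrt{R_0})$ (both share the $t^{-1}$ singularity at $0$), while the lower bound follows because a value $f(t_0) < -\sqrt{R_0}$ would force finite-time blow-down of $f$, hence a singularity of $A_v$, contradicting the no-conjugate-points hypothesis. Your derivation of the bounds on $S_v'(0)$ and $U_v'(0)$ via the identifications $S_{v,r}'(0) = -A_w'(r)A_w^{-1}(r)$ (with $w = -\phi^r v$) and $U_{v,r}'(0) = A_{w'}'(r)A_{w'}^{-1}(r)$ (with $w' = \phi^{-r}v$), followed by $r \to \infty$, is also correct; the paper uses precisely the relation $S_{v,r}(t) = A_v(r-t)A_v^{-1}(r)$ (in somewhat looser notation) immediately after stating the lemma.
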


Note that $A_v$ and $S_{v,r}$ are related by $S_{v,r}(t) = A_v(r-t)A_v^{-1}(r)$. 
Therefore, Lemma \ref{lem:AAm1} has the following consequence.

\begin{cor} \label{cor:CR0r0T} Let $r_0 > 1$ and $T \le r_0$. If
  $\Vert R_v(t) \Vert \le R_0$ for all $v \in SX$ and $t \in \R$ with
  a constant $R_0 > 0$, we have for all $r \ge r_0$
  $$ \Vert S_{v,r}(t) \Vert \le C_1(R_0,r_0,T) \qquad 
  \text{for all $0 \le t \le T$}, $$ 
  with $C_1(R_0,r_0,T) > 0$ only depending on $r_0$, $R_0$ and $T$.
\end{cor}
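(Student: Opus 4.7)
The plan is to combine the explicit relation $S_{v,r}(t) = A_v(r-t)\,A_v^{-1}(r)$, which is stated right before the corollary, with the a priori bounds in Lemma \ref{lem:AAm1}. From this identity and Lemma \ref{lem:AAm1} one first extracts uniform bounds on $S_{v,r}(0)$ and $S_{v,r}'(0)$ that are independent of $r \ge r_0$; then the Jacobi equation, together with the two-sided bound $\|R_v(t)\| \le R_0$, propagates these bounds to all $t \in [0,T]$ via a standard Gronwall-type argument.

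For the first step, evaluating at $t = 0$ gives immediately $S_{v,r}(0) = \id$, and differentiation gives $S_{v,r}'(0) = -A_v'(r)\,A_v^{-1}(r)$. Lemma \ref{lem:AAm1} asserts the operator inequalities
$$ -\sqrt{R_0}\,\id \;\le\; A_v'(r)\,A_v^{-1}(r) \;\le\; \sqrt{R_0}\coth(r\sqrt{R_0})\,\id. $$
Since $\coth$ is strictly decreasing on $(0,\infty)$ and $r \ge r_0 > 1$, both sides can be uniformly estimated by $\sqrt{R_0}\coth(r_0\sqrt{R_0})$, yielding
$$ \|S_{v,r}'(0)\| \;\le\; \sqrt{R_0}\coth(r_0\sqrt{R_0}) \;=:\; C_0(R_0,r_0), $$
uniformly in $v \in SX$ and $r \ge r_0$.

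For the second step, write the Jacobi equation $S_{v,r}''(t) + R_v(t) S_{v,r}(t) = 0$ as a first-order linear system for $Y(t) = (S_{v,r}(t),\, S_{v,r}'(t))$; the operator norm of its coefficient matrix is bounded by $\max(1,R_0)$. Gronwall's inequality gives
$$ \|Y(t)\| \;\le\; \|Y(0)\|\,e^{\max(1,R_0)\,t} \;\le\; \sqrt{1 + C_0(R_0,r_0)^2}\; e^{\max(1,R_0)\,T} $$
for $0 \le t \le T$, and in particular $\|S_{v,r}(t)\|$ is bounded by a constant depending only on $R_0$, $r_0$, and $T$.

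There is no serious obstacle here: the one point worth noting is that the bound on $\|S_{v,r}'(0)\|$ must be made uniform in $r \ge r_0$, and this is exactly where the hypothesis $r_0 > 1$ together with the monotonicity of $\coth$ enters. The rest is elementary ODE analysis with bounded coefficients.
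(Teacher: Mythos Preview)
Your proof is correct and follows essentially the same approach as the paper: bound $\|S_{v,r}'(0)\|$ uniformly in $r \ge r_0$ via Lemma \ref{lem:AAm1} and the monotonicity of $\coth$, then rewrite the Jacobi equation as a first-order system with bounded coefficients and propagate the initial bound to $[0,T]$. Your invocation of Gronwall is in fact slightly more careful than the paper's formulation (which writes a matrix exponential for a generally non-commuting system), though both yield the same norm estimate.
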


\begin{proof}
  We conclude from Lemma \ref{lem:AAm1} for all $r \ge r_0$,
  $$ \Vert S_{v,r}'(0) \Vert = \Vert A_v'(r) A_v^{-1}(r) \Vert \le 
  \sqrt{R_0} \coth (r_0 \sqrt{R_0}). $$ 

  Let $y(t) = (y_1(t),y_2(t))^\top$ with $y_1(t) = S_{v,r}(t)$ and
  $y_2(t) = S_{v,r}'(t)$. Then
  $$ y'(t) = \begin{pmatrix} y_1'(t) \\ y_2'(t) \end{pmatrix} = 
  \begin{pmatrix} 0 & 1 \\ -R_v(t) & 0 \end{pmatrix} \begin{pmatrix}
    y_1(t) \\ y_2(t) \end{pmatrix} = C(t) y(t), $$ 
  i.e.,
  $$ y(t) = \exp\left(\int_0^t C(s) ds\right) y(0). $$
  Note that $\Vert C(t) \Vert \le \sqrt{{R_0}^2 + 1}$ and $\Vert y(0)
  \Vert^2 \le 1 + R_0 \coth^2 (r_0 \sqrt{R_0})$. This yields
  $$ \Vert S_{v,r}(t) \Vert \le \Vert y(t) \Vert \le 
  \exp\left(T \sqrt{{R_0}^2 + 1}\right) \sqrt{1 + R_0 \coth^2 (r_0
    \sqrt{R_0})} $$ 
  for all $0 \le t \le T$, finishing the proof.
\end{proof}

Next, we present some useful Jacobi tensor identities.

\begin{lemma} \label{central} For all $v \in SX$ and $ t < r$ we have
  \begin{equation}\label{c1}
    S_{\phi^t(v),r-t}'(0) =  S_{v,r}'(t) S_{v,r}^{-1}(t), 
  \end{equation}
  and
  \begin{eqnarray}\label{c2}
    U_{\phi^t(v)}'(0) - S_{\phi^t(v), r-t}'(0) &=&
    (U_v^{\ast})^{-1}(t)(U_v'(0) -S_{v, r}'(0)) S_{v, r}^{-1}(t)\nonumber \\
    &=&(S_{v, r}^{\ast})^{-1}(t)(U_v'(0) -S_{v, r}'(0)) U_v^{-1}(t).
  \end{eqnarray}
  Furthermore,
  \begin{equation}\label{c3}
    U_{\phi^t(v)}'(0) - S_{\phi^t(v),r-t}'(0) = 
    (S^{\ast}_{v,r})^{-1}(t) \left(\int\limits_{-\infty}^{t} 
      (S_{v,r}^{\ast}S_{v,r})^{-1}(u) du \right)^{-1} S_{v,r}^{-1}(t).
  \end{equation}
\end{lemma}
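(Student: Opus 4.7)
I would handle (c1)--(c3) in sequence, with (c1) a direct consequence of Jacobi tensor uniqueness, (c2) a Wronskian computation, and (c3) a reduction-of-order representation chained with (c2). For (c1), I set $J(s) := S_{v,r}(t+s)\,S_{v,r}^{-1}(t)$ and observe that $J$ is a Jacobi tensor along $c_{\phi^t(v)}$ in the variable $s$ with $J(0) = I$ and $J(r-t) = S_{v,r}(r)\,S_{v,r}^{-1}(t) = 0$; uniqueness forces $J = S_{\phi^t(v), r-t}$, and differentiating at $s = 0$ yields (c1). The analogous translation argument for the unstable tensor gives the companion formula $U_{\phi^t(v)}'(0) = U_v'(t)\,U_v^{-1}(t)$, which I will need below.

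The key tool for (c2) is that, for any two Jacobi tensors $A, B$ along $c_v$, the cross-Wronskian $(A')^\ast(t) B(t) - A^\ast(t) B'(t)$ is constant in $t$, which a direct differentiation using $R^\ast = R$ proves. Moreover, whenever a Jacobi tensor $A$ satisfies $A(t_0) = I$ with $A'(t_0)$ symmetric --- which holds for $U_v$ at $t_0 = 0$, and, via the identity $W(S_{v,r}, S_{v,r})(r) = 0$, also for $S_{v,r}$ at $t_0 = 0$ --- the self-Wronskian vanishes, giving $A^\ast A' = (A')^\ast A$ identically. Applying the cross-Wronskian to $(A,B) = (U_v, S_{v,r})$ and evaluating at $t = 0$ yields $(U_v')^\ast(t) S_{v,r}(t) - U_v^\ast(t) S_{v,r}'(t) = U_v'(0) - S_{v,r}'(0) =: D$. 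Combining this with the self-Wronskian identity for $U_v$ and the translation formulas from (c1), a short manipulation gives
$$
U_v^\ast(t)\bigl[U_v'(t) U_v^{-1}(t) - S_{v,r}'(t) S_{v,r}^{-1}(t)\bigr] S_{v,r}(t) = D,
$$
which is the first form of (c2). The second form follows by taking adjoints: the left-hand side of (c2) is a difference of symmetric operators (second fundamental forms of the horosphere and the geodesic sphere at $\pi(\phi^t(v))$) and $D$ is symmetric, so adjoining interchanges the roles of $U_v$ and $S_{v,r}$.

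For (c3) I would represent $U_v$ via reduction of order. Every Jacobi tensor $J$ along $c_v$ with $J(0) = I$ has the form $J(s) = S_{v,r}(s)\bigl[I + L(s)\,M\bigr]$, where $L(s) = \int_0^s (S_{v,r}^\ast S_{v,r})^{-1}(u)\,du$ and the constant matrix $M$ equals $J'(0) - S_{v,r}'(0)$; this ansatz solves the Jacobi equation because $S_{v,r}^\ast S_{v,r}' = (S_{v,r}')^\ast S_{v,r}$. Applying the ansatz to the finite-interval unstable tensor $J = U_{v,\rho}$ (writing $\rho$ to avoid clashing with the fixed $r$), the condition $U_{v,\rho}(-\rho) = 0$ forces $M = \bigl(\int_{-\rho}^0 (S_{v,r}^\ast S_{v,r})^{-1}(u)\,du\bigr)^{-1}$, so
$$
U_{v,\rho}'(0) = S_{v,r}'(0) + \bigl(\textstyle\int_{-\rho}^0 (S_{v,r}^\ast S_{v,r})^{-1}(u)\,du\bigr)^{-1}.
$$
Passing $\rho \to \infty$ and invoking the defining convergence $U_{v,\rho}'(0) \to U_v'(0)$, the improper integral converges and one obtains the key identity $D = \bigl(\int_{-\infty}^0 (S_{v,r}^\ast S_{v,r})^{-1}(u)\,du\bigr)^{-1}$. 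Substituting the ansatz with $J = U_v$ and splitting $\int_{-\infty}^t = \int_{-\infty}^0 + \int_0^t$ gives $U_v(t) = S_{v,r}(t)\int_{-\infty}^t (S_{v,r}^\ast S_{v,r})^{-1}(u)\,du \cdot D$; inverting and substituting into the second form of (c2) produces (c3).

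The only genuinely delicate point is the passage $\rho \to \infty$ in (c3): existence and invertibility of $\int_{-\infty}^0 (S_{v,r}^\ast S_{v,r})^{-1}$ are not separate analytic inputs but are encoded in the defining convergence $U_{v,\rho}'(0) \to U_v'(0)$ that sets up the unstable tensor. Beyond that step, the proof reduces to translation of Jacobi tensors for (c1) and non-commutative bookkeeping with Wronskians and adjoints for (c2).
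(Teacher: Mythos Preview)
Your proof is correct and follows essentially the same route as the paper: \eqref{c1} via uniqueness of Jacobi tensors after the translation $S_{\phi^t v,r-t}(s)=S_{v,r}(t+s)S_{v,r}^{-1}(t)$, \eqref{c2} via constancy of the Wronskian $W(U_v,S_{v,r})$ together with the symmetry of $U_{\phi^t(v)}'(0)$ and $S_{\phi^t(v),r-t}'(0)$, and \eqref{c3} by inserting the representation of $U_v$ in terms of $S_{v,r}$ into the second line of \eqref{c2}. The only difference is that the paper quotes \cite[(7.8)]{Kn2} for that representation, whereas you supply a self-contained reduction-of-order derivation (applying the ansatz to $U_{v,\rho}$ and passing $\rho\to\infty$), which is a welcome but inessential addition.
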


\begin{proof}
  Notice first that
  \begin{equation} \label{eq:Stransform}
  S_{\phi^t v,x}(y) = S_{v,t+x}(y+t) S_{v,t+x}^{-1}(t),
  \end{equation}
  since, for fixed $x $ and $t$, both sides define Jacobi tensors in
  $y$ which agree at $y=0$ and $y=x$. Differentiating at $y= 0$ yields
  for $x = r-t$ the first identity \eqref{c1}. Using the fact that the
  Wronskian of two Jacobi tensors is constant, we have
  \begin{multline*}
  W(U_v,S_{v,r})(t) = (U_v^{\ast})'(t) S_{v,r}(t) - U_v^{\ast}(t)
  S_{v, r}'(t) = \\ W(U_v,S_{v, r})(0) =U_v'(0) -S_{v,r}'(0).
  \end{multline*}
  This yields
  $$
  (U_v'(t) U_v^{-1}(t))^{\ast}- S_{v,r}'(t) S_{v,r}^{-1}(t) =
  (U_v^{\ast})^{-1}(t) (U_v'(0) -S_{v,r}'(0)) S_{v,r}^{-1}(t).
  $$
  Since
  $$
  (U_v'(t) U_v^{-1}(t))= U_{\phi^t(v)}'(0) \; \; \text{and} \; \;
  S_{v,r}'(t) S_{v,r}^{-1}(t) = S_{\phi^t(v),r-t}'(0)
  $$
  are symmetric, we obtain the first and second identity of \eqref{c2}.

  To prove the last assertion we note that for $0 \le t \le r$ we have
  (see \cite[(7.8)]{Kn2})
  $$
  \left(\int\limits_{-\infty}^{t} (S_{v,r}^{\ast}S_{v,r})^{-1}(u) du
  \right)^{-1} S_{v,r}(t)^{-1}U_v(t)= U_v'(0) - S_{v,r}'(0).
  $$ 
  Inserting this into \eqref{c2} yields \eqref{c3}.
\end{proof}

Recall from the introduction that $S(v) = S_v'(0)$ and $U(v)= U_v'(0)$. 
A key role plays the positive symmetric operators
$$ D(v) = U(v) - S(v), $$
since their kernels determine the rank of the manifold $X$.

\begin{prop} \label{prop:SUest} Assume there exists $R_0 > 0$ such
  that $\Vert R_v(t) \Vert \le R_0$ for all $t \in \R$. Then we have the 
  following estimates for $S_{v,r}$ and $U_{v,r}$.
  \begin{itemize}
  \item[(a)] There exists $a_1 = a_1(R_0)$ such that for all $r > 1$
    and $t \ge 0$,
    $$ \Vert S_{v,r}(-t) \Vert \le a_1 e^{\sqrt{R_0} t}, \qquad 
    \Vert U_{v,r}(t) \Vert \le a_1 e^{\sqrt{R_0} t}. $$
  \item[(b)] Under the additional assumption $D(\phi^t(v)) \ge \rho
    \cdot \id$ for all $t \in \R$ and some constant $\rho > 0$, there
    exists $a_2 = a_2(R_0,\rho)$ such that for all $r > 1$ and $0 \le
    t < r$,
    $$ \Vert S_{v,r}(t) \Vert \le a_2 e^{-\frac{\rho}{2} t}, \qquad 
    \Vert U_{v,r}(-t) \Vert \le a_2 e^{-\frac{\rho}{2} t}. $$
  \end{itemize}
\end{prop}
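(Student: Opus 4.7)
For part (a) I would use Lemma \ref{lem:AAm1} together with the identity $S_{v,r}(t)=A_v(r-t)A_v^{-1}(r)$, which implies $\|S_{v,r}(-t)\|=\|A_v(r+t)A_v^{-1}(r)\|$ for $t\ge 0$. For each fixed $e_0$, the Jacobi field $s\mapsto A_v(s)e_0$ satisfies $(\|A_v(s)e_0\|^2)'=2\langle A_v'(s)A_v^{-1}(s)A_v(s)e_0,A_v(s)e_0\rangle$; the upper Riccati bound $A_v'A_v^{-1}\le\sqrt{R_0}\coth(s\sqrt{R_0})$, integrated from $r$ to $r+t$, yields
\[
\|A_v(r+t)A_v^{-1}(r)\|\le\frac{\sinh((r+t)\sqrt{R_0})}{\sinh(r\sqrt{R_0})}\le\frac{e^{\sqrt{R_0}t}}{1-e^{-2\sqrt{R_0}}}
\]
for $r>1$, giving the claimed estimate with $a_1=(1-e^{-2\sqrt{R_0}})^{-1}$. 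The bound on $U_{v,r}(t)$ then follows from the symmetry $U_{v,r}(t)=S_{-v,r}(-t)$.

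For part (b) the central ingredient is identity \eqref{c3}, which I would rewrite at time $t$ as
\[
S_{v,r}(t)\,M_r(t)\,S_{v,r}^*(t)=N_r(t)^{-1},\qquad M_r(t):=\int_{-\infty}^t (S_{v,r}^*S_{v,r})^{-1}(u)\,du,
\]
where $N_r(t)=U_{\phi^t v}'(0)-S_{\phi^t v,r-t}'(0)$. The classical Green's monotonicity $S_{w,s}'(0)\le S(w)$ for manifolds without conjugate points implies $N_r(t)\ge D(\phi^t v)\ge\rho\cdot\id$, so
\[
S_{v,r}(t)\,M_r(t)\,S_{v,r}^*(t)\le\rho^{-1}\id,\qquad\text{hence}\qquad\|S_{v,r}(t)\|^2\le\frac{1}{\rho\,\mu_r(t)}
\]
with $\mu_r(t):=\lambda_{\min}(M_r(t))$.

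Next I would close the loop via a Gr\"onwall bootstrap. For each unit vector $e$, $\phi_e(t):=\langle M_r(t)e,e\rangle$ is absolutely continuous with $\phi_e'(t)=\|S_{v,r}^{-*}(t)e\|^2\ge\|S_{v,r}(t)\|^{-2}$ and $\phi_e(0)\ge\mu_r(0)$. Integrating and taking infimum over unit $e$ yields
\[
\mu_r(t)\ge\mu_r(0)+\int_0^t\|S_{v,r}(s)\|^{-2}\,ds\ge\mu_r(0)+\rho\int_0^t\mu_r(s)\,ds,
\]
and Gr\"onwall gives $\mu_r(t)\ge\mu_r(0)e^{\rho t}$. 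Evaluating the identity at $t=0$ shows $M_r(0)=N_r(0)^{-1}$, and Lemma \ref{lem:AAm1} controls $\|N_r(0)\|\le\sqrt{R_0}(1+\coth(\sqrt{R_0}))$ for $r>1$, so $\mu_r(0)$ is bounded below by a constant depending only on $R_0$. Combining produces $\|S_{v,r}(t)\|\le a_2e^{-\rho t/2}$ with $a_2=a_2(R_0,\rho)$; the corresponding bound on $\|U_{v,r}(-t)\|$ follows by replaying the argument for $-v$ together with $U_{v,r}(t)=S_{-v,r}(-t)$, since $D(\phi^t v)\ge\rho\cdot\id$ for all $t\in\R$ is invariant under the substitution $v\mapsto-v$.

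The main obstacle I expect is justifying the uniform lower bound $N_r(t)\ge D(\phi^t v)$, i.e.\ Green's monotonicity $S_{v,r}'(0)\le S(v)$: this is classical in the no-conjugate-points setting but requires a careful citation of the Eberlein/Green theory. A subsidiary technical point is the well-definedness of $M_r(t)$ as an absolutely convergent integral, which is guaranteed by the backward exponential growth estimates established in part (a).
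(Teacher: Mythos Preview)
Your proof is correct and follows essentially the same strategy as the paper: part (a) via the Rauch/Riccati comparison and the identity $S_{v,r}(-t)=A_v(r+t)A_v^{-1}(r)$, and part (b) via identity \eqref{c3}, the monotonicity $S_{w,r}'(0)\nearrow S(w)$, and a Gr\"onwall-type argument. The bookkeeping differs slightly: the paper runs the differential inequality on $F(t)=\int_0^t\varphi$ with $\varphi(t)=\|S_{v,r}(t)\|^{-2}$ and uses Corollary~\ref{cor:CR0r0T} to bound $F(1)$ from below, whereas you run Gr\"onwall on $\mu_r(t)=\lambda_{\min}(M_r(t))$ and extract the seed $\mu_r(0)=\|N_r(0)\|^{-1}$ directly by evaluating \eqref{c3} at $t=0$. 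Your initial-condition step is arguably more economical, since it bypasses the auxiliary ODE comparison of Corollary~\ref{cor:CR0r0T}.

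One small inaccuracy: the convergence of $M_r(t)=\int_{-\infty}^t (S_{v,r}^*S_{v,r})^{-1}(u)\,du$ does \emph{not} follow from part (a), which only gives an upper bound on $\sigma_{\max}(S_{v,r}(-s))$, not the lower bound on $\sigma_{\min}(S_{v,r}(-s))$ that integrability would require. The convergence is instead part of the content of Lemma~\ref{central} (via \cite[(7.8)]{Kn2}), so your argument is unaffected; you should simply cite Lemma~\ref{central} for the well-definedness of $M_r$ rather than part (a).
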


\begin{proof}
  Rauch's comparison estimate (see, e.g., \cite[Chapter 1, Prop. 2.11]{Kn1}) 
  implies that $\Vert A(t) x \Vert / \sinh \sqrt{R_0} t$
  is monotone decreasing. Since $S_{v,r}(-t) = A_v(r+t) A_v^{-1}(r)$
  we conclude
  $$ \Vert S_{v,r}(-t) \frac{A_v(r)x}{\Vert A_v(r)x \Vert} \Vert
  = \frac{\Vert A(r+t)x \Vert}{\Vert A(r)x \Vert} \le \frac{\sinh
    \sqrt{R_0}(r+t)}{\sinh \sqrt{R_0} r} \le a_1(R_0) e^{\sqrt{R_0} t}. $$ 
  This together with $U_{v,r}(t) = S_{-v,r}(-t)$ proves (a).

  Using the monotonicity $S_{w,r}'(0) \nearrow S_w'(0)$, we have by
  assumption
  $$
  U_{\phi^t(v)}'(0) - S_{\phi^t(v),r-t}'(0) \ge U_{\phi^t(v)}'(0) -
  S_{\phi^t(v)}'(0) = D(\phi^t(v)) \ge \rho \id.
  $$
  Using \eqref{c3}, this yields for all $x \in (\phi^t v)^\perp$ with
  $\|x\| =1$,
  \begin{eqnarray*}
    \rho &\le& \left \langle \left( \int\limits_{-\infty}^{t} 
        (S_{v,r}^{\ast}S_{v,r}^{-1}(u) du \right)^{-1}  
      S_{v,r}^{-1}(t)x, S_{v,r}^{-1}(t)x \right \rangle\\
    &\le& \left\| 
      \left(\int\limits_{-\infty}^{t} (S_{v,r}^{\ast}S_{v, r})^{-1}(u) du \right)^{-1}       
      \right\| \cdot \left\| S_{v, r}^{-1}(t)x \right\|^2.
  \end{eqnarray*}
  Furthermore, we have
  \begin{equation*}
    \begin{split}
      &\left\| \left(\int\limits_{-\infty}^{t}
          (S_{v,r}^{\ast}S_{v,r})^{-1}(u) du \right)^{-1} \right\| =\\
      &\quad \quad \frac{1} { \min \left \{\int\limits_{-\infty}^{t}
          \langle (S_{v,r}^{\ast}S_{v,r})^{-1}(u)y_u, y_u \rangle
          du:\; y \in v^\perp, \|y\| =1 \right\} }.
    \end{split} 
  \end{equation*}
  Therefore,
  \begin{equation*}
    \begin{split}
      \rho \min &\left\{\int\limits_{-\infty}^{t} \langle
        (S_{v,r}^{\ast})^{-1}(u)y_u, (S_{v,r}^{\ast})^{-1}(u)y_u \rangle du
        :\; y \in v\perp, \|y\| =1) \right\}\\
      & \quad \quad \le \|S_{v,r}^{-1}(t)x \|^2
    \end{split}
  \end{equation*}
  for all $x \in (\phi^t v)^\perp$ with $\|x\| =1$. Defining
  \begin{eqnarray*}
    \varphi(u) &:= &\min \left\{\|(S_{v,r}^{\ast})^{-1}(u)y \|^2:\; 
      y \in ( \phi^u v)^\perp, \|y\| = 1 \right\}\\
    &=& \min \left\{\|S_{v,r}^{-1}(u)y \|^2:\; 
      y \in (\phi^u v) ^\perp, \|y\| = 1 \right\},
  \end{eqnarray*}
  we obtain
  $$
  \rho \int\limits_{0}^{t} \varphi(u)du \le \rho
  \int\limits_{-\infty}^{t} \varphi(u)du \le \varphi(t)
  $$
  and, hence,
  $$
  \rho F(t) \le F'(t)
  $$
  for $F(t) := \int\limits_{0}^{t} \varphi(u)du$. This means that
  $\rho \le (\log F)'(t)$, which implies $F(t) \ge \frac{F(1)}{e}
  e^{\rho t}$ for all $t \ge 1$ and, therefore,
  \begin{equation} \label{eq:phit}
  \varphi(t)= F'(t) \ge \rho F(t)\ge \frac{\rho F(1)}{e} e^{\rho t} 
  \end{equation}
  for all $r > t \ge 1$. Corollary \ref{cor:CR0r0T} implies for $0 \le
  t \le 1 < r$
  \begin{eqnarray*} 
  \varphi(t) &=& \min \left\{\|S_{v,r}^{-1}(t)y \|^2:\; 
    y \in (\phi^t v) ^\perp, \|y\| = 1 \right\}\\ &=& \frac{1}{\Vert
    S_{v,r}(t) \Vert} \ge \frac{1}{C_1(R_0,1,1)},
  \end{eqnarray*}
  i.e., $F(1) \ge \frac{1}{C_1(R_0,1,1)}$. Plugging this into \eqref{eq:phit},
  we obtain
  $$
  \varphi(t) \ge \frac{\rho}{e C_1(R_0,1,1)} e^{\rho t} $$ 
  for all $r > t \ge 1$. Choosing $a_2 = \left(
    \frac{C_1(R_0,1,1)}{\min\{ \rho/e,e^{-\rho} \} } \right)^{1/2}$,
  this implies that we have
  $$ \frac{\Vert S_{v,r}^{-1}(t)y \Vert^2}{\Vert y \Vert^2} \ge \varphi(t) \ge
  \frac{1}{C_1(R_0,1,1)}\min\{ \frac{\rho}{e}, e^{-\rho}
  \}e^{\rho t} = \frac{1}{{a_2}^2} e^{\rho t} $$ 
  for all $r > t \ge 0$ and $y \in (\phi^t v)^\bot$, $y \neq 0$.
  Since $S_{v,r}(t): (\phi^tv)^\perp \to (\phi^tv)^\perp$ is an
  isomorphism, we obtain for all $x \in (\phi^tv)^\perp$, $r > 1$ and $t \in
  [0,r)$
  $$
  \|S_{v, r}(t)x \| \le a_2 e^{-\frac{\rho}{2} t} \|x \|,
  $$
  finishing the proof of (b).
\end{proof}

\begin{rmk}
  The special case of Proposition \ref{prop:SUest}(b) for stable and
  unstable Jacobi tensors was obtained
  by Bolton (see \cite[Lemma 2]{Bo}). 
\end{rmk}

The following corollary summarizes the facts which we will need further on 
in this chapter.

\begin{cor} \label{cor:SUbeta} Let $\Vert R_v(t) \Vert \le R_0$ for
  all $v \in SX$ and $t \in \R$ with a constant $R_0 > 0$. Let
  $\gamma: [0,1] \to W^s(v)$ be a smooth curve and $\rho > 0$ such that 
  $$ D(\phi^t(\gamma(s))) \ge \rho \cdot \id $$
  for all $s \in [0,1]$ and $t \in \R$. Then there exists a function
  $b: \R \to (0,\infty)$, only depending on $R_0$ and $\rho$, such
  that we have for all $r > 1$ and all $-\infty < t < r$,
  \begin{equation} \label{eq:SUest} 
  \Vert S_{\gamma(s),r}(t) \Vert \le b(t), \qquad 
  \Vert U_{\gamma(s),r}(-t) \Vert \le b(t). 
  \end{equation}
  For $t \ge 0$ we have
  \begin{equation} \label{eq:b(t)} 
  b(t) \le a_2 e^{-\frac{\rho}{2} t}. 
  \end{equation}
  Moreover, if $\beta = \pi \gamma$ and $\beta_t = \pi (\phi^t
  \gamma)$, we have
  \begin{eqnarray}
  \Vert \beta_t'(s) \Vert &\le& b(t)\, \Vert \beta'(s) \Vert \label{eq:betatbeta} \\
  \Vert (\phi^t \gamma)'(s) \Vert &\le& b(t)\sqrt{1 + R_0}\, \Vert \beta'(s) \Vert \label{eq:phitgamest}
  \end{eqnarray}
  for all $s \in [0,1]$ and $t \in \R$. 
\end{cor}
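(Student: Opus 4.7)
The plan is to read the first bound directly off Proposition \ref{prop:SUest} and then to express both $\beta_t'(s)$ and the full $SX$-derivative $(\phi^t\gamma)'(s)$ in terms of a stable Jacobi field, so that the function $b(t)$ constructed in the first step controls everything.

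For the first assertion I would split into cases. For $t \ge 0$ and $r > 1$, part (b) of Proposition \ref{prop:SUest} gives $\Vert S_{\gamma(s),r}(t)\Vert \le a_2 e^{-\rho t/2}$ and $\Vert U_{\gamma(s),r}(-t)\Vert \le a_2 e^{-\rho t/2}$, uniformly in $r$ and $s$, since the hypothesis $D(\phi^t(\gamma(s))) \ge \rho \cdot \id$ is assumed for every $s$. For $t < 0$, part (a) yields the analogous bounds with $a_1 e^{-\sqrt{R_0}\, t}$ (note $t < 0$, so this is genuinely an upper bound). Setting $b(t)$ equal to the maximum of the two majorants in each regime gives a function depending only on $R_0$ and $\rho$ that satisfies \eqref{eq:SUest} and \eqref{eq:b(t)}.

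Next, I would identify $\beta_t'(s)$ with the value at time $t$ of a stable Jacobi field. Writing $\beta_t(s) = c_{\gamma(s)}(t)$, its $s$-derivative is the Jacobi field $J_s$ along $c_{\gamma(s)}$ coming from the variation $s' \mapsto c_{\gamma(s')}$, with $J_s(0) = \beta'(s)$. Because $\gamma$ takes values in $W^s(v) = W^s(\gamma(s))$, its velocity $\gamma'(s)$ is tangent to the stable leaf, and under the horizontal-vertical splitting of $T_{\gamma(s)}SX$ this tangent space is the graph of $S(\gamma(s))$; this forces $J_s$ to be a stable Jacobi field, so $J_s(t) = S_{\gamma(s)}(t)\beta'(s)$. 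Passing to the limit $r \to \infty$ in the uniform bound from the first step yields $\Vert S_{\gamma(s)}(t)\Vert \le b(t)$, and \eqref{eq:betatbeta} follows.

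Finally, for \eqref{eq:phitgamest} I would use the Sasaki norm on $SX$, for which $\Vert(\phi^t\gamma)'(s)\Vert^2 = \Vert J_s(t)\Vert^2 + \Vert J_s'(t)\Vert^2$. Passing to the limit $r \to \infty$ in \eqref{c1} gives $S_v'(t) = S(\phi^t(v))S_v(t)$, so $J_s'(t) = S(\phi^t(\gamma(s)))\, J_s(t)$, and Lemma \ref{lem:AAm1} bounds $\Vert S(\phi^t(\gamma(s)))\Vert \le \sqrt{R_0}$. Combining these estimates with $\Vert J_s(t)\Vert \le b(t)\Vert\beta'(s)\Vert$ from the previous step gives $\Vert J_s'(t)\Vert \le \sqrt{R_0}\, b(t)\Vert\beta'(s)\Vert$, and hence $\Vert(\phi^t\gamma)'(s)\Vert \le b(t)\sqrt{1+R_0}\Vert\beta'(s)\Vert$. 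The step I expect to need the most care is the identification of $J_s$ as a stable Jacobi field; it rests on the structural fact that for $w \in SX$ the tangent space $T_w W^s(w)$ is the graph of the stable second fundamental form $S(w)$ in the horizontal-vertical decomposition of $T_w SX$, a standard but crucial ingredient that must be invoked explicitly.
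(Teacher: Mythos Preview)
Your proposal is correct and follows essentially the same route as the paper: construct $b(t)$ by combining the two parts of Proposition~\ref{prop:SUest}, identify $\beta_t'(s)$ as the stable Jacobi field $J_s(t)=S_{\gamma(s)}(t)(\beta'(s))_t$, and bound the vertical component of $(\phi^t\gamma)'(s)$ via the stable second fundamental form $S(\phi^t\gamma(s))$ together with Lemma~\ref{lem:AAm1}. The only cosmetic difference is that the paper phrases the vertical part as $\nabla_{\beta_t'(s)}\phi^t\gamma(s)=S_{\phi^t\gamma(s)}'(0)\beta_t'(s)$ (second fundamental form of the horosphere) rather than deriving $J_s'(t)=S(\phi^t\gamma(s))J_s(t)$ from the limit of \eqref{c1}, but these are the same identity.
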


\begin{proof}
  The inequalities \eqref{eq:SUest} and \eqref{eq:b(t)} are straightforward consequences
  of Proposition \ref{prop:SUest}. The same inequalities hold also for
  the stable and unstable Jacobi tensors $S_{\gamma(s)}$ and
  $U_{\gamma(s)}$. Note that $J_s(t) = \beta_t'(s) =
  \frac{\partial}{\partial s} c_{\gamma(s)}(t)$ is the stable Jacobi
  field along $c_{\gamma(s)}$ with initial values
  $$ J_s(0) = \beta'(s) \quad \text{and} \quad 
  J_s'(0) = S_{\gamma(s)}'(0) J_s(0). $$
  Then $J_s(t) = S_{\gamma(s)}(t) (J_s(0))_t$, which implies
  $$ \Vert \beta_t'(s) \Vert \le \Vert S_{\gamma(s)}(t) 
  ( \beta'(s) )_t \Vert \le b(t) \Vert \beta'(s) \Vert. $$
  Furthermore we have
  $$ \Vert (\phi^t \gamma)'(s) \Vert^2 = \Vert \frac{d}{ds} \beta_t(s) \Vert^2 + \Vert  \frac{D}{ds} \phi^t \gamma(s) \Vert^2
  = \Vert \frac{d}{ds} \beta_t(s) \Vert^2 + \Vert  \nabla_{\beta_t'(s)} \phi^t\gamma(s) \Vert^2. $$
  Since $\nabla_{\beta_t'(s)} \phi^t\gamma(s)$ is the second fundamental form of the horosphere 
  $\pi W^s(\phi^t(\gamma(s)))$, we have with Lemma \ref{lem:AAm1}
  $$ \Vert (\phi^t \gamma)'(s) \Vert^2 = \Vert \frac{d}{ds} \beta_t(s) \Vert^2 + \Vert  S_{\phi^t \gamma(s)}'(0) \beta_t'(s) \Vert^2
  \le b(t)^2 (1 + R_0) \Vert \beta'(s) \Vert^2. $$
  This implies \eqref{eq:phitgamest}.
\end{proof}

\subsection{Estimate for $\Vert \frac{\partial}{\partial s}
  R_{\gamma(s)}(t) \Vert$}

Our next goal is to derive an estimate for $\Vert
\frac{\partial}{\partial s} R_{\gamma(s)}(t) \Vert$ in terms of
$\beta'(s)$. Henceforth, we assume that the curvature tensor and
its covariant derivative of $X$ are bounded, i.e.,  
$$
\Vert R \Vert \le R_0 \quad \text{and} \quad \Vert \nabla R \Vert \le R_0' 
$$
with suitable constants $R_0, R_0' > 0$. Moreover, let $\gamma: [0,1]
\to W^s(v)$ denote a smooth curve such that
$$ D(\phi^t(\gamma(s))) \ge \rho \cdot \id $$
for all $s \in [0,1]$ and $t \in \R$ with a suitable constant $\rho > 0$. 
Let $e_i(s)$ and $e_i(s,t)$ be defined as in Proposition
\ref{prop:Sdiff-formula} and $\beta = \pi \gamma$ and $\beta_t =
\pi(\phi^t \gamma)$.

\begin{lemma} \label{lem:dsei}
  Let $r > 1$. Then there exists a constant $C_2(R_0,\rho,r)$, only
  depending on $R_0, \rho$ and $r$, such that
  $$ \left\Vert \frac{D}{ds} e_i(s,t) \right\Vert \le C_2(R_0,\rho,r) 
  \Vert \beta'(s) \Vert \qquad \text{for all $t \in (-r,r)$}. $$
\end{lemma}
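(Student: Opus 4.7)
The plan is to set $E_i(s,t) := \frac{D}{ds} e_i(s,t)$, viewed as a vector field along the two-parameter map $(s,t) \mapsto c_{\gamma(s)}(t)$, and to derive a first-order ODE for $E_i$ along each geodesic $c_{\gamma(s)}$. Since $e_i(s,t)$ is by definition the parallel translate of $e_i(s)$ along $c_{\gamma(s)}$, we have $\frac{D}{dt} e_i(s,t) = 0$. The standard commutator formula for the Levi-Civita connection then gives
\begin{equation*}
\frac{D}{dt} E_i(s,t) \;=\; \frac{D}{ds}\frac{D}{dt} e_i(s,t) + R\!\left(\tfrac{\partial}{\partial s} c_{\gamma(s)}(t),\, \tfrac{\partial}{\partial t} c_{\gamma(s)}(t)\right) e_i(s,t) \;=\; R\!\left(\beta_t'(s),\, c'_{\gamma(s)}(t)\right) e_i(s,t),
\end{equation*}
using that $\frac{\partial}{\partial s} c_{\gamma(s)}(t) = J_s(t) = \beta_t'(s)$ is the stable Jacobi field along $c_{\gamma(s)}$.

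Integrating this ODE from $0$ to $t$, and using that parallel transport preserves norms, yields
\begin{equation*}
\Vert E_i(s,t) \Vert \;\le\; \Vert E_i(s,0) \Vert + \left| \int_0^t \Vert R(\beta_\tau'(s),\, c'_{\gamma(s)}(\tau)) e_i(s,\tau) \Vert\, d\tau \right|.
\end{equation*}
Since $\Vert R \Vert \le R_0$, $\Vert c'_{\gamma(s)}(\tau)\Vert = \Vert e_i(s,\tau)\Vert = 1$, and $\Vert \beta_\tau'(s)\Vert \le b(\tau)\Vert \beta'(s)\Vert$ by \eqref{eq:betatbeta}, the integrand is controlled by $R_0\, b(\tau)\, \Vert \beta'(s)\Vert$. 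Because $b$ is bounded on the compact interval $[-r,r]$ by a constant depending only on $R_0, \rho$ and $r$ (using \eqref{eq:b(t)} for $\tau \ge 0$ and the pointwise estimate from Proposition \ref{prop:SUest}(a) for $\tau < 0$), the integral term is bounded by a constant times $\Vert \beta'(s)\Vert$ for all $t \in (-r,r)$.

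For the initial condition $E_i(s,0) = \frac{D}{ds} e_i(s)$, I will use the Gauss formula for the horosphere $\mathcal H = \pi W^s(v)$. Since $e_i(s)$ is parallel along $\beta$ for the induced connection on $\mathcal H$ and $\gamma(s)$ is a unit normal to $\mathcal H$, one obtains
\begin{equation*}
\frac{D}{ds} e_i(s) \;=\; -\langle S(\gamma(s))\, \beta'(s),\, e_i(s)\rangle\, \gamma(s),
\end{equation*}
so $\Vert E_i(s,0)\Vert \le \Vert S(\gamma(s))\Vert \cdot \Vert \beta'(s)\Vert \le \sqrt{R_0}\, \Vert \beta'(s)\Vert$ by Lemma \ref{lem:AAm1}. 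Combining the two bounds produces the desired constant $C_2(R_0,\rho,r)$.

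The calculations are routine once the ODE is set up; the one conceptual point to get right is the initial-condition estimate, namely that horospheric parallelism differs from ambient parallelism exactly by the horospheric second fundamental form, which is uniformly bounded thanks to Lemma \ref{lem:AAm1}. Note that the bound $\Vert \nabla R\Vert \le R_0'$ is not needed for this lemma; it will enter only in the subsequent estimate of $\Vert \frac{\partial}{\partial s} R_{\gamma(s)}(t)\Vert$.
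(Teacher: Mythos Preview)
Your argument is essentially the same as the paper's: both derive the identity $\frac{D}{dt}\frac{D}{ds}\,e_i(s,t)=R(\,\cdot\,,\,\cdot\,)e_i(s,t)$ via the commutator formula, bound the right-hand side by $R_0\,b(\tau)\,\Vert\beta'(s)\Vert$ using \eqref{eq:betatbeta}, and control the initial value $\frac{D}{ds}e_i(s,0)$ through the horospheric second fundamental form and Lemma \ref{lem:AAm1}. The paper phrases the integration step as a bound on $\bigl\vert\partial_t f\bigr\vert$ for $f(s,t)=\Vert\frac{D}{ds}e_i(s,t)\Vert$, which is equivalent to your formulation.
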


\begin{proof}
  First of all, note that the second fundamental form of all
  horospheres is bounded by $\sqrt{R_0}$. Let $N$ be a unit normal
  vector field of ${\mathcal H} = \pi W^s(v)$. Since $e_i$ is parallel
  in ${\mathcal H}$ with respect to the induced connection, we have
  $$ \frac{D}{ds} e_i(s) = \left\langle \frac{D}{ds} e_i(s), 
    (N \circ \beta)(s) \right\rangle \; (N \circ \beta)(s). $$
  Therefore,
  \begin{eqnarray}
    \left\Vert \frac{D}{ds} e_i(s) \right\Vert^2 &=& \left\langle \frac{D}{ds}
      e_i(s), (N \circ \beta)(s) \right\rangle^2 = 
    \left\langle e_i(s), \frac{D}{ds} N \circ \beta(s) \right\rangle^2 
    \nonumber \\
    &\le& \Vert e_i(s) \Vert^2\; \Vert \nabla N \circ \beta(s) \Vert^2\; 
    \Vert \beta'(s) \Vert^2 \nonumber \\
    &\le& R_0\; \Vert \beta'(s) \Vert^2. \label{eq:Aest}  
  \end{eqnarray}
  Let $P_{\gamma(s)}^t$ be the parallel transport along $c_{\gamma(s)}$. Define
  $$ f(s,t) = \left\Vert \frac{D}{ds} e_i(s,t) \right\Vert = 
  \left\Vert \frac{D}{ds} P_{\gamma(s)}^t e_i(s) \right\Vert. $$
  Differentiation yields
  \begin{equation} \label{eq:df2}
    \frac{\partial}{\partial t} f^2(s,t) = 2 \left\langle \frac{D}{dt} 
    \frac{D}{ds} P_{\gamma(s)}^t e_i(s), \frac{D}{ds} e_i(s,t) \right\rangle. 
  \end{equation}
  Note that
  \begin{eqnarray*}
    \frac{D}{dt} \frac{D}{ds} P_{\gamma(s)}^t e_i(s) &=&
    \underbrace{\frac{D}{ds} \frac{D}{dt} P_{\gamma(s)}^t e_i(s)}_{=0}
    + R\left(c_{\gamma(s)}'(t),\frac{\partial}{\partial s}
      c_{\gamma(s)}(t) \right)e_i(s,t) \\
    &=& R\left(c_{\gamma(s)}'(t),\frac{\partial}{\partial s}
      c_{\gamma(s)}(t) \right)e_i(s,t).
  \end{eqnarray*} 
  Plugging this into \eqref{eq:df2} we conclude
  $$ \left\vert \frac{\partial f}{\partial t}(s,t) \right\vert \le R_0 \; 
  \Vert c_{\gamma(s)}'(t) \Vert \; \left\Vert
    \frac{\partial}{\partial s} c_{\gamma(s)}(t) \right\Vert \; \Vert
  e_i(s,t) \Vert = R_0 \; \Vert \beta_t'(s) \Vert,
  $$
  which implies
  \begin{eqnarray*}
    f(s,t) &\le& f(s,0) + \int_{\min\{0,t\}}^{\max\{0,t\}} 
    \left\vert \frac{\partial f}{\partial t}(s,\tau) \right\vert d\tau \\
    &\le& \left\Vert \frac{D}{ds} e_i(s) \right\Vert + R_0 \int_{-r}^r \Vert
    \beta'(s) \Vert d\tau \\
    &\stackrel{\eqref{eq:Aest}}{\le}& 
    \sqrt{R_0} \Vert \beta'(s) \Vert + R_0 \int_{-r}^r \left\Vert S_{\gamma(s)}(\tau)
      (\beta'(s))_\tau \right\Vert d\tau \\
    &\le& \sqrt{R_0} \; \Vert \beta'(s) \Vert + R_0  \int_{-r}^r b(t) dt \; 
    \Vert \beta'(s) \Vert. 
  \end{eqnarray*}
  This finishes the proof.
\end{proof}

The estimate for $\Vert \frac{\partial}{\partial s} R_{\gamma(s)}(t)
\Vert$ is derived from the components. The $(i,j)$-th component of
$R_{\gamma(s)}(t)$ is
$$ 
\langle R_{\gamma(s)}e_i(s,t), e_j(s,t) \rangle = \langle
R(e_i(s,t),\phi^t(\gamma(s))) \phi^t(\gamma(s)), e_j(s,t) \rangle.
$$
This implies that we have
\begin{multline*}
\left( \frac{\partial}{\partial s} R_{\gamma(s)}(t) \right)_{i,j} = 
\left\langle \frac{D}{ds} \left( R_{\gamma(s)}(t)e_i(s,t) \right),
  e_j(s,t) \right\rangle + \\ \left\langle R_{\gamma(s)}(t) e_i(s,t),
  \frac{D}{ds} e_j(s,t) \right\rangle.
\end{multline*}
Using
\begin{multline*}
  \nabla_J (R(Z,W)W) = \\
  (\nabla_JR)(Z,W)W + R(\nabla_JZ,W)W + R(Z,\nabla_JW)W + R(Z,W)\nabla_JW
\end{multline*}
and the bounds $\Vert R \Vert \le R_0$ and $\Vert \nabla R \Vert \le R_0'$,
we obtain
\begin{multline*}
  \left\Vert \frac{D}{ds} \left( R_{\gamma(s)}(t) e_i(s,t) \right) \right\Vert
  \le \\
  R_0'\; \Vert\; \beta_t'(t) \Vert + 
  R_0 \left( \left\Vert \frac{D}{ds} e_i(s,t) \right\Vert + 
  2 \left\Vert \frac{D}{ds} \left( \phi^t(\gamma(s) \right) \right\Vert \right)
  \le \\
  R_0'\; b(t)\; \Vert \beta'(s) \Vert + 
  R_0 \left( C_2(R_0,\rho,r) \Vert \beta'(s) \Vert + 
  2 \left\Vert \frac{D}{ds} \left( \phi^t(\gamma(s) \right) \right\Vert 
  \right),  
\end{multline*}
where we used \eqref{eq:betatbeta} and Lemma \ref{lem:dsei}. Since
$\frac{D}{ds} \phi^t(\gamma(s)) = \nabla_{\beta_t'(s)}
\phi^t(\gamma(s))$ is the second fundamental form of the horosphere
$\pi W^s(\phi^t(\gamma(s)))$
which is bounded in norm by $\sqrt{R_0} \Vert \beta_t'(s) \Vert$, 
we finally obtain
$$ \left\Vert \frac{D}{ds} \left( R_{\gamma(s)}(t) e_i(s,t) \right)
  \right\Vert \le C_3(R_0,R_0',\rho,r) \Vert \beta'(s) \Vert
$$
with $C_3(R_0,R_0',\rho,r) = R_0' b(t) + R_0 C_2(R_0,\rho,r) + 2 R_0^{3/2}
b(t)$.  This implies that
\begin{equation} \label{eq:dsRijest}
\left\vert \left( \frac{\partial}{\partial s} R_{\gamma(s)}(t) \right)_{i,j}   
\right\vert \le \left( C_3(R_0,R_0',\rho,r) + R_0 C_2(R_0,\rho,r) 
\right) \Vert \beta'(s) \Vert. 
\end{equation}

\subsection{An estimate for the difference of second fundamental forms
  in horospheres}

Combining the results in the first three subsections, we are now able
to prove the following result.

\begin{theorem} \label{thm:SUdistconv} Let $(X,g)$ be a complete
  simply connected Riemannian manifold without conjugate
  points. Assume that $\Vert R_0 \Vert \le R_0$ and $\Vert \nabla R
  \Vert \le R_0'$ with suitable constants $R_0, R_0' > 0$. Let
  $\gamma: [0,1] \to W^s(v)$ be a smooth curve and $\beta = \pi
  \gamma$. Assume that $D(\phi^t(\gamma(s))) \ge \rho \cdot \id$ for
  all $s \in [0,1]$ and $t \in \R$ and some constant $\rho > 0$. Let
  $r > 1$. Then there exists a constant $C_5(R_0,R_0',\rho,r) > 0$,
  only depending on $R_0,R_0',\rho$ and $r$, such that
  $$ \Vert S_{\gamma(1),r}'(0) - S_{\gamma(0),r}'(0) \Vert \le 
  C_5(R_0,R_0',\rho,r)\; \ell(\beta) $$
  and
  $$ \Vert U_{\gamma(1),r}'(0) - U_{\gamma(0),r}'(0) \Vert \le 
  C_5(R_0,R_0',\rho,r)\; \ell(\beta), $$
  where $\ell(\beta)$ denotes the length of the curve $\beta$. 
\end{theorem}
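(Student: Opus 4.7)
The plan is to apply Proposition \ref{prop:Sdiff-formula} and then bound the integrand pointwise by assembling the estimates of Corollary \ref{cor:SUbeta} and \eqref{eq:dsRijest}. Starting from
$$S'_{\gamma(1),r}(0) - S'_{\gamma(0),r}(0) = \int_0^1 \int_0^r S^{\ast}_{\gamma(s),r}(t) \Bigl(\tfrac{\partial}{\partial s} R_{\gamma(s)}(t)\Bigr) S_{\gamma(s),r}(t)\, dt\, ds,$$
I would pass operator norms inside the double integral to obtain
$$\Vert S'_{\gamma(1),r}(0) - S'_{\gamma(0),r}(0)\Vert \le \int_0^1 \int_0^r \Vert S_{\gamma(s),r}(t)\Vert^2 \, \Vert \tfrac{\partial}{\partial s} R_{\gamma(s)}(t)\Vert\, dt\, ds.$$

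Next, I would insert the two required inputs. From Corollary \ref{cor:SUbeta}, the rank-one assumption $D(\phi^t \gamma(s)) \ge \rho \, \id$ gives $\Vert S_{\gamma(s),r}(t)\Vert^2 \le b(t)^2 \le a_2^2 e^{-\rho t}$ for $0 \le t < r$. From \eqref{eq:dsRijest}, each entry of the $(n-1)\times(n-1)$ matrix $\partial_s R_{\gamma(s)}(t)$ is bounded by $(C_3(R_0,R_0',\rho,r) + R_0 C_2(R_0,\rho,r)) \Vert \beta'(s)\Vert$, which I would convert to an operator-norm bound by the standard comparison between the componentwise and operator norms. Substituting both estimates separates the double integral into a product: the $t$-integral is bounded by $\int_0^\infty a_2^2 e^{-\rho t} dt = a_2^2/\rho$ independently of $r$, and the $s$-integral equals $\ell(\beta) = \int_0^1 \Vert \beta'(s)\Vert\, ds$. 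Collecting all constants into a single $C_5(R_0,R_0',\rho,r)$ yields the first inequality.

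The $U$ estimate is entirely analogous, starting from formula \eqref{eq:Utvr'-Uvr'} and applying the bound $\Vert U_{\gamma(s),r}(-t)\Vert \le b(t) \le a_2 e^{-\rho t/2}$ for $t \ge 0$; after the substitution $\tau = -t$ the integral takes exactly the same form as in the $S$ case. There is no real obstacle, since Subsections 2.2 and 2.3 have already done all the analytic work: this theorem is essentially the bookkeeping step that plugs those pointwise bounds into the Hopf-type integral identity. The only subtlety is that the constant $C_3$ in \eqref{eq:dsRijest} itself involves $b(t)$ through the $R_0' b(t) + 2R_0^{3/2} b(t)$ contribution; since $b(t) \le a_2$ on the interval $t \in [0,r]$ relevant for the $S$-integral (and symmetrically on $[-r,0]$ for $U$), this causes no problem and the resulting $C_5$ depends only on the stated parameters.
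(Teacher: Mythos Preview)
Your proposal is correct and follows essentially the same route as the paper: apply the Hopf-type integral identity of Proposition \ref{prop:Sdiff-formula}, pass to norms, insert the Jacobi-tensor bound $\Vert S_{\gamma(s),r}(t)\Vert \le b(t)$ from Corollary \ref{cor:SUbeta} and the curvature-variation bound \eqref{eq:dsRijest}, and integrate. The paper packages \eqref{eq:dsRijest} into a single constant $C_4(R_0,R_0',\rho,r)$ and sets $C_5 = C_4 \int_{-r}^r b(t)^2\,dt$, whereas you note that the $t$-integral of $b(t)^2$ over $[0,r]$ is itself bounded by $a_2^2/\rho$ independently of $r$; this is a harmless refinement, since the unavoidable $r$-dependence of $C_5$ enters through $C_2(R_0,\rho,r)$ (and hence $C_3$), exactly as you observe in your final remark.
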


\begin{proof}
  We only give the proof of the second estimate, the first estimate is
  proved analogously. Let $v = \gamma(0)$ and $\widetilde v =
  \gamma(1)$. Inequality \eqref{eq:dsRijest} implies that there is
  a constant $C_4 = C_4(R_0,R_0',\rho,r) > 0$ such that
  $$ \left\Vert \frac{\partial}{\partial s} R_{\gamma(s)}(t) \right\Vert 
  \le C_4 \Vert \beta'(s) \Vert. $$
  We conclude from Proposition \ref{prop:Sdiff-formula} and
  Corollary \ref{cor:SUbeta} that
  \begin{eqnarray*}
    \Vert U_{\gamma(1),r}'(0) - U_{\gamma(0),r}'(0) \Vert &\le& \int_0^1 \int_{-r}^0
    \Vert U_{\gamma(s),r}(t) \Vert^2 \; \left\Vert \frac{\partial}{\partial s} 
      R_{\gamma(s)}(t) \right\Vert dt \; ds \\
    &\le& \int_0^1 C_4 \int_{-r}^0 b(-t)^2 dt\; \Vert \beta'(s) \Vert ds \\
    &\le& C_5(R_0,R_0',\rho,r) \; \ell(\beta)
  \end{eqnarray*}
  with $C_5(R_0,R_0',\rho,r) = C_4 \int_{-r}^r b(t)^2 dt$. 
\end{proof}

\section{The function $\det D(v)$ is constant}

From now on, we assume that $(X,g)$ is asymptotically harmonic. Recall
that we introduced the positive symmetric operator $D(v) = U(v) -
S(v)$. Our aim is to prove Theorem \ref{thm:Dconstant} in the Introduction.

Note first that in the case of asymptotically harmonic manifolds the
stable and unstable Jacobi tensors are continuous in the sense of
\cite[p. 242]{Es}. This property is also called {\em continuous
  asymptote}.

\begin{lemma} \label{lem:contasym}
  Let $(X,g)$ be an asymptotically harmonic manifold. Then $v \mapsto
  U(v)$ and $v \mapsto S(v)$ are continuous maps on $SX$.
\end{lemma}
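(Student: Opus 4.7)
The plan is to exploit three ingredients: (i) for each finite $r$, the maps $v \mapsto S_{v,r}'(0)$ and $v \mapsto U_{v,r}'(0)$ are continuous on $SX$ by smooth dependence of solutions of the Jacobi ODE on initial conditions; (ii) the convergence $S_{v,r}'(0) \nearrow S(v)$ and $U_{v,r}'(0) \searrow U(v)$ as $r \to \infty$ is monotone in the operator sense, so that $S(v) - S_{v,r}'(0) \ge 0$ and $U_{v,r}'(0) - U(v) \ge 0$; (iii) the asymptotic harmonicity hypothesis forces $\tr U(v) \equiv h$ and $\tr S(v) = -\tr U(-v) \equiv -h$, both being constants and hence continuous.

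First I would fix a compact subset $K \subset SX$ and consider the nonnegative continuous functions $f_r(v) = \tr U_{v,r}'(0) - h$ on $K$. By (ii), $f_r$ decreases monotonically in $r$, and by (iii) it converges pointwise to $0$ (the constant function, which is continuous). Dini's theorem therefore gives uniform convergence $f_r \to 0$ on $K$.

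The key observation, and the step that makes the argument work, is then that for a symmetric positive semidefinite operator $A$ on a finite-dimensional inner product space one has $\Vert A \Vert \le \tr A$. Applying this to $A = U_{v,r}'(0) - U(v) \ge 0$, which is possible precisely because of the monotonicity in (ii), gives
$$ \Vert U_{v,r}'(0) - U(v) \Vert \le \tr(U_{v,r}'(0) - U(v)) = \tr U_{v,r}'(0) - h = f_r(v). $$
Uniform convergence of $f_r$ on $K$ therefore upgrades to uniform convergence of $U_{v,r}'(0) \to U(v)$ in operator norm on $K$. A uniform limit of continuous maps is continuous, so $v \mapsto U(v)$ is continuous on $K$; since $K \subset SX$ was an arbitrary compact subset, $U$ is continuous on all of $SX$. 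The argument for $S$ is identical, using $g_r(v) = -h - \tr S_{v,r}'(0) \ge 0$, monotone decreasing, pointwise $\to 0$, and the bound $\Vert S(v) - S_{v,r}'(0) \Vert \le g_r(v)$.

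The main (and essentially only) obstacle is the verification that one may pass from uniform control of the trace to uniform control in operator norm; this is exactly where both the asymptotic harmonicity (which renders the pointwise limit of the traces constant, hence continuous, so that Dini applies) and the one-sided operator monotonicity of the approximations $S_{v,r}'(0), U_{v,r}'(0)$ enter in an essential way. All other ingredients are standard continuous dependence for ODEs and elementary linear algebra.
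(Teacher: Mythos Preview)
Your proof is correct and follows essentially the same route as the paper: bound the operator-norm difference by the trace difference via positivity, apply Dini's theorem to the traces using asymptotic harmonicity, and conclude continuity from uniform convergence on compacta. The only cosmetic difference is that the paper dispatches $S$ in one line via $S(v) = -U(-v)$ rather than repeating the Dini argument.
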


\begin{proof} Since $U_{v,t}'(0) - U_v'(0)$ is positive, we have for
  $t > 0$
  $$ \Vert U_{v,t}'(0) - U_v'(0) \Vert \le \tr (U_{v,t}'(0) - U_v'(0)) = 
  \tr(U_{v,t}'(0)) - h. $$ 
  Since $\tr(U_{v,t}'(0))$ converges pointwise monotonically to $h$
  as $t \to \infty$, we conclude from Dini that the convergence is
  uniformly on all compact subsets of $SX$. Since the maps $v \mapsto
  U_{v,t}'(0)$ is continuous for all $t > 0$ and $U_{v,t}'(0) \to
  U_v'(0) = U(v)$ uniformly on compact sets, we conclude continuity of
  $v \mapsto U(v)$. The continuity of $v \mapsto S(v)$ follows
  immediately from $S(v) = -U(-v)$.
\end{proof}

As a start, it is easy to see that $\det D(v) = \det D(-v)$:
$$ \det D(-v) = \det (U(-v) - S(-v)) = \det (-S(v) + U(v)) = \det D(v). $$ 
Now we work towards the result that $\det D(v)$ is constant on all of
$SX$.

\subsection{$\det D(v)$ is constant along the geodesic flow}
\label{sec:constflow}

The arguments in this section follow the arguments given in the proof
of \cite[Lemma 2.2]{HKS}.

\begin{prop} \label{prop:detconst}
  Let $(X,g)$ be asymptotically harmonic. Then for all $v \in SX$, the map 
  $t \mapsto \det (D(\phi^t v))$ is constant. 
\end{prop}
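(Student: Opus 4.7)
\medskip

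The plan is to differentiate $\det D(\phi^t v)$ directly using the matrix Riccati equation for the stable and unstable second-fundamental-form operators. Writing $\mathcal{U}(t) := U(\phi^t v)$ and $\mathcal{S}(t) := S(\phi^t v)$, transformation identities analogous to \eqref{eq:Stransform} give $\mathcal{U}(t) = U_v'(t) U_v(t)^{-1}$ and $\mathcal{S}(t) = S_v'(t) S_v(t)^{-1}$. Since $U_v$ and $S_v$ both satisfy the Jacobi equation $J'' + R_v J = 0$, a direct computation shows that $\mathcal{U}$ and $\mathcal{S}$ both obey the matrix Riccati equation $Y' = -Y^2 - R_v(t)$. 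Subtracting yields
\begin{equation*}
D'(t) = -\bigl(\mathcal{U}(t)^2 - \mathcal{S}(t)^2\bigr) = -\bigl( D(t)\,\mathcal{U}(t) + \mathcal{S}(t)\,D(t) \bigr).
\end{equation*}

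The main obstacle is that $D(v)$ can have a nontrivial kernel, so Jacobi's formula does not apply verbatim. This is handled by the flow-invariance of $v \mapsto \rank(v)$ noted in the introduction: $\dim \ker D(\phi^t v)$ is constant along the orbit, so either $\det D(\phi^t v) \equiv 0$ (in which case there is nothing to prove), or $D(\phi^t v)$ is invertible for all $t$. In the latter case, Jacobi's formula combined with the cyclic property of the trace yields
\begin{equation*}
\frac{d}{dt} \log \det D(t) = \tr\bigl( D(t)^{-1} D'(t) \bigr)
= -\tr \mathcal{U}(t) - \tr\bigl(D(t)^{-1}\mathcal{S}(t)D(t)\bigr)
= -\tr \mathcal{U}(t) - \tr \mathcal{S}(t).
\end{equation*}

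Finally, asymptotic harmonicity applied at $\phi^t v$ gives $\tr \mathcal{U}(t) = h$, while the identity $S(w) = -U(-w)$ (immediate from $U_w(s) = S_{-w}(-s)$) combined with asymptotic harmonicity at $-\phi^t v$ gives $\tr \mathcal{S}(t) = -\tr U(-\phi^t v) = -h$. The two contributions cancel, so $(\log \det D)'(t) = 0$ and $\det D(\phi^t v)$ is constant on the orbit. This is essentially the Riccati calculation in the proof of \cite[Lemma 2.2]{HKS}, but stripped of the negative-curvature assumption used there: the flow-invariance of the rank lets us bypass the invertibility hypothesis at the outset.
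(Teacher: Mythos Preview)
Your proof is correct and follows essentially the same route as the paper: both use the Riccati equation to obtain $D' = S^2 - U^2$, factor this (the paper writes $D' = HD + DH$ with $H = -\tfrac{1}{2}(U+S)$, you write $D' = -(D\,\mathcal{U} + \mathcal{S}\,D)$, which is the same identity), apply Jacobi's formula with the cyclic trace, and conclude from $\tr U = h$, $\tr S = -h$. The only notable difference is your handling of the degenerate case: the paper simply says ``if $\det D(\phi^t v) \neq 0$ for some $t$'' and computes there (relying implicitly on continuity to propagate), whereas you invoke the flow-invariance of $\rank(v)$ to get invertibility along the whole orbit at once---a cleaner justification, but not a genuinely different argument.
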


\begin{proof}
  For the proof we need besides $D(v)$ the symmetric tensor $H(v) = -
  \frac{1}{2} (U(v) + S(v))$.  Note that $U$ and therefore also $S$
  are solutions of the Ricatti equation
  $$ \frac{d}{dt} U(\phi^t v) + U(\phi^t v)^2 + R_{\phi^t v} = 0. $$ 
  Hence, a straightforward calculation yields for all $v \in SX$
  \begin{equation} \label{eq:HGGH} (HD+DH)(\phi^tv) = S(\phi^tv)^2 -
    U(\phi^tv)^2 = \frac{d}{dt} D(\phi^t v).
  \end{equation}
  In the case $\det D(\phi^t v) = 0$ for all $t \in \R$, there is
  nothing to prove. If $\det D(\phi^t v) \neq 0$ for some $t \in \R$,
  we have
  \begin{eqnarray*}
    \frac{d}{dt} \log \det D(\phi^t v) &=& 
    \frac{1}{\det D(\phi^t v)} \tr\left( \left(\frac{d}{dt} D(\phi^t v)\right) 
      D^{-1}(\phi^t v) \right) \\
    &=& \frac{1}{\det D(\phi^t v)} \tr \left( (HD+DH)(\phi^t v) 
      D^{-1}(\phi^t v)\right) \\
    &=& \frac{2}{\det D(\phi^t v)} \tr \left( H(\phi^t v) \right) = 0,
  \end{eqnarray*}
  since $\tr H(w) = -\frac{1}{2}(\tr U(w) + \tr S(w)) = -\frac{1}{2}
  (\tr U(w) - \tr U(-w)) = 0$. This implies that $t \mapsto \det
  D(\phi^t v)$ is constant for all $t \in \R$.
\end{proof}

\subsection{$\det D(v)$ is constant along stable and unstable
  manifolds}
\label{sec:const(un)stable}

Note that the key ingredients here are Proposition \ref{prop:SUest}(b)
and Theorem \ref{thm:SUdistconv}. We first prove the following lemma.

\begin{lemma} \label{lem:DDr} Assume there is a constant $R_0 > 0$
  such that $\Vert R \Vert \le R_0$. Let $v \in SX$. Assume there is a
  constant $\rho > 0$ such that
  $$ D(\phi^t(v)) \ge \rho \id $$
  for all $t \in \R$. Then there exists a constant $a \ge 1$, depending
  only on $R_0$ such that
  $$ 0 < S_{\phi^t(v)}'(0) - S_{\phi^t(v),r}'(0) \le \frac{a}{r} \quad
  \text{and} \quad 0 < U_{\phi^t(v),r}'(0) - U_{\phi^t(v)}'(0) \le
  \frac{a}{r}
  $$
  for all $r > 0$ and all $t \in \R$.
\end{lemma}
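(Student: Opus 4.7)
My plan is to prove both inequalities at $t=0$; the general case follows by running the same argument along the geodesic $c_{\phi^t v}$, since the hypothesis $D(\phi^s v) \ge \rho\,\id$ for all $s \in \R$ is itself $\phi^t$-invariant. The strategy is to rewrite each difference as a product of two tensors using the constancy of a suitable Wronskian, and then to estimate the two factors separately by Rauch's comparison theorem and by Proposition~\ref{prop:SUest}(b).

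For the first inequality I would consider the Wronskian
\[
W(\sigma) = S_v^\ast(\sigma) S_{v,r}'(\sigma) - (S_v^\ast)'(\sigma)\, S_{v,r}(\sigma),
\]
which is constant in $\sigma$ because both $S_v$ and $S_{v,r}$ are Jacobi tensors along $c_v$. Evaluating at $\sigma = 0$, where both tensors equal $\id$ and $S_v'(0)$ is symmetric, gives $W = S_{v,r}'(0) - S_v'(0)$. Evaluating at $\sigma = r$, where $S_{v,r}(r) = 0$ and, by the formula $S_{v,r}(\sigma) = A_v(r-\sigma) A_v^{-1}(r)$, $S_{v,r}'(r) = -A_v^{-1}(r)$, gives $W = -S_v^\ast(r)\, A_v^{-1}(r)$. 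Hence
\[
S_v'(0) - S_{v,r}'(0) = S_v^\ast(r)\, A_v^{-1}(r).
\]
The analogous Wronskian for $U_v$ and $U_{v,r}$, using $U_{v,r}(-r) = 0$ together with $U_{v,r}(\sigma) = S_{-v,r}(-\sigma)$ (which yields $U_{v,r}'(-r) = A_{-v}^{-1}(r)$), produces
\[
U_{v,r}'(0) - U_v'(0) = U_v^\ast(-r)\, A_{-v}^{-1}(r) = S_{-v}^\ast(r)\, A_{-v}^{-1}(r).
\]
Strict positivity of both right-hand sides is automatic: in a manifold without conjugate points $A_v(r)$, $A_{-v}(r)$, $S_v(r)$ and $S_{-v}(r)$ are all invertible, so the products are invertible, and their sign is fixed by the known monotone convergences $S_{v,r}'(0) \nearrow S_v'(0)$ and $U_{v,r}'(0) \searrow U_v'(0)$.

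To bound these expressions I would apply Rauch's comparison with the upper sectional curvature bound provided by $\Vert R \Vert \le R_0$, obtaining $\Vert A_v(r) x \Vert \ge \sinh(\sqrt{R_0}\, r)/\sqrt{R_0}$ for all unit $x \in v^\perp$ and hence $\Vert A_v^{-1}(r) \Vert \le \sqrt{R_0}/\sinh(\sqrt{R_0}\, r)$, and similarly for $A_{-v}$. For the stable factor, the identities $U(-w) = -S(w)$ and $S(-w) = -U(w)$ force $D(-w) = D(w)$, so the hypothesis $D(\phi^s v) \ge \rho\,\id$ transfers verbatim to $\phi^s(-v)$. Passing to the limit $r \to \infty$ in Proposition~\ref{prop:SUest}(b), applied to both $v$ and $-v$, then gives $\Vert S_v(r) \Vert \le a_2 e^{-\rho r/2}$ and $\Vert S_{-v}(r) \Vert \le a_2 e^{-\rho r/2}$. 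Combining yields the (essentially exponential) estimates
\[
\Vert S_v'(0) - S_{v,r}'(0) \Vert,\ \Vert U_{v,r}'(0) - U_v'(0) \Vert \le \frac{a_2 \sqrt{R_0}}{\sinh(\sqrt{R_0}\, r)}\, e^{-\rho r/2},
\]
which are in particular majorised by $a/r$ for a suitable constant $a \ge 1$.

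The main obstacle is to reconcile this argument with the assertion that $a$ depends \emph{only} on $R_0$: the estimate above produces an $a$ that also depends on $\rho$ via $a_2 = a_2(R_0,\rho)$. A truly $\rho$-independent bound would, I expect, require bypassing the quantitative exponential decay of Proposition~\ref{prop:SUest}(b) and instead using the integral representation $D(v)^{-1} = \int_{-\infty}^0 (S_v^\ast S_v)^{-1}(u)\, du$ from Lemma~\ref{central}: writing each difference as a tail of this integral gives a nonnegative monotone quantity whose $L^1$-norm on $(-\infty,0)$ is controlled purely by the curvature bound, so that the $1/r$ decay would fall out by an averaging argument.
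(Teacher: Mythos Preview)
Your Wronskian identity $S_v'(0)-S_{v,r}'(0)=S_v^\ast(r)\,S_{v,r}'(r)^{-1}$-type formula is fine, but the Rauch step is wrong. From $\Vert R\Vert\le R_0$ you only get sectional curvature in $[-R_0,R_0]$; the \emph{upper} bound $K\le R_0$ yields, via Rauch, $\Vert A_v(r)x\Vert\ge \sin(\sqrt{R_0}\,r)/\sqrt{R_0}$ only for $r<\pi/\sqrt{R_0}$, while the \emph{lower} bound $K\ge -R_0$ gives an \emph{upper} estimate $\Vert A_v(r)x\Vert\le\sinh(\sqrt{R_0}\,r)/\sqrt{R_0}$. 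The inequality $\Vert A_v(r)x\Vert\ge\sinh(\sqrt{R_0}\,r)/\sqrt{R_0}$ that you use would require $K\le -R_0$, i.e.\ strictly negative curvature bounded away from zero, which is precisely the extra hypothesis the paper is trying to avoid. So the bound on $\Vert A_v^{-1}(r)\Vert$ collapses, and with it the whole estimate.

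The paper takes a different route, essentially the one you outline in your last paragraph: it uses the integral representation
\[
S_w'(0)-S_{w,r}'(0)=\Bigl(\int_0^r (S_w^\ast S_w)^{-1}(u)\,du\Bigr)^{-1}
\]
together with the uniform bound $\Vert S_w(u)\Vert\le a_2$ from Proposition~\ref{prop:SUest}(b), which makes the integrand $\ge a_2^{-2}$ and hence the inverse $\le a_2^2/r$. Note, however, that your hope that this avoids the $\rho$-dependence is unfounded: the paper's constant is $a=a_2(R_0,\rho)^2$, so it depends on $\rho$ just as yours does. Your observation that the statement ``depending only on $R_0$'' is too optimistic is therefore correct and applies equally to the paper's own proof; for the application in Section~\ref{sec:const(un)stable} only the uniform $1/r$ decay along the orbit matters, and there $\rho$ is fixed.
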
 

\begin{proof}
  Since we have $U_{w,r}'(0) = - S_{-w,r}(0)$ for all $w \in SX$, it
  suffices to prove the first assertion. Proposition
  \ref{prop:SUest}(b) yields for all $t \ge 0$
  $$ \Vert S_w(t) \Vert \le a_2=a_2(R_0,\rho), $$
  where $w = \phi^s(v)$ for some $s \in \R$. Recall from \cite[Lemma
  2.3]{Kn2} that
  $$ 
  S_w'(0) - S_{w,r}'(0) = \left( \int_0^r (S_w^* S_w)^{-1}(u)du \right)^{-1}.
  $$
  This implies for all $x \in S_{c_v(s)}X$
  \begin{eqnarray*}
  \langle (S_w'(0)-S_{w,r}'(0))x,x \rangle &\le& 
  \left\Vert \left( \int_0^r (S_w^*S_w)^{-1}(u)du \right)^{-1} \right\Vert \\ 
  &\le& \left( \int_0^r \Vert (S_w^*S_w) \Vert^{-1}(u)du \right)^{-1} \\
  &\le& \left( \int_0^r \frac{1}{a_2^2} du \right)^{-1} = \frac{a_2^2}{r},
  \end{eqnarray*}
  which yields the required estimate.
\end{proof}

Now we assume that $(X,g)$ has rank one, i.e., we have $\det D(w) > 0$
for all $w \in SX$. It suffices to show that $w \mapsto \det D(w)$ is
locally constant on $W^s(v)$. Let $v \in SX$ and $\rho > 0$ such that
$\det D(v) = 2 \rho$. Since $w \mapsto \det D(w)$ is continuous on
$SX$ by Lemma \ref{lem:contasym}, we find an open neighbourhood $U
\subset SX$ of $v$ such that $\det D(w) \ge \rho$ for all $w \in
U$. Let $v, \widetilde v \in U \cap W^s(v)$ and $\gamma: [0,1] \to U
\cap W^s(v)$ be a smooth curve with $\gamma(0)=v$ and
$\gamma(1)=\widetilde v$. We need to show that for every $\epsilon >
0$ we have $|\det D(v) - \det D(\widetilde v)| < \epsilon$. We have
$$ |\det D(v) - \det D(\widetilde v)| = |\det D(\phi^t(v)) - \det
D(\phi^t(\widetilde v))| $$ 
for all $t \in \R$ and
\begin{equation} \label{eq:dtvdttiv}
\lim_{t \to \infty }d(\phi^t(v),\phi^t(\widetilde v)) = 0, 
\end{equation}
and the convergence is exponentially because of \eqref{eq:b(t)} and
\eqref{eq:phitgamest}. 

Since our operators $D(w) = U(w) - S(w) \ge 0$ are uniformly bounded by $2 \sqrt{R_0}$ and
the determinant is a differentiable function, there is a uniform Lipschitz
constant $A > 0$ such that
$$ \vert \det D(w_1) - \det D(w_2) \vert \le A \, \Vert D(w_1) - D(w_2) \Vert. $$
Therefore, it suffices to show that, for every $\delta > 0$, there exists
$t > 0$ such that
\begin{equation} \label{eq:Ddiff}
\Vert D(\phi^t(v)) - D(\phi^t(\widetilde v)) \Vert < \delta.
\end{equation}
Let $D_r(w) = U_{w,r}'(0) - S_{w,r}'(0)$. Lemma \ref{lem:DDr} implies
$$ \Vert D(\phi^t(w)) - D_r(\phi^t(w)) \Vert \le \frac{2a}{r} $$
for all $w \in \gamma([0,1])$ and $t \in \R$. Therefore, we can choose $r > 1$
large enough such that we have
$$ \Vert D(\phi^t(w)) - D_r(\phi^t(w)) \Vert \le \frac{\delta}{3} $$
for all $w \in \gamma([0,1])$ and $t \in \R$. This implies that
\eqref{eq:Ddiff} holds if
$$ \Vert D_r(\phi^t(v)) - D_r(\phi^t(\widetilde v)) \Vert 
< \frac{\delta}{3} $$ 
for some $t > 0$. But this is a direct consequence of
\eqref{eq:dtvdttiv} and Theorem \ref{thm:SUdistconv}.

This shows that $w \to \det D(w)$ is locally and therefore also
globally constant on $W^s(v)$. Note that $w \to \det D(w)$ is also
constant on $W^u(v)$: Let $w \in W^u(v)$. Then $-w \in W^s(-v)$ and
$$ \det D(w) = \det D(-w) = \det D(-v) = \det D(v). $$

\subsection{$\det D(v)$ is constant on $SX$} 

In the case $\det D(v) = 0$ for all $v \in SX$ there is nothing to prove.
Therefore, we assume that there exists $v \in SX$ with $\det D(v) \neq 0$.

For $v \in SX$, let
\begin{eqnarray*}
  W^{0s}(v) &=& \bigcup_{t \in \R} \phi^t(W^s(v)) = \{ -\grad b_v(q) 
  \mid q \in X \}, \\
  W^{0u}(v) &=& \bigcup_{t \in \R} \phi^t(W^u(v)) = \{ \grad b_{-v}(q) 
  \mid q \in X \}.
\end{eqnarray*} 
Observe that $W^{0u}(v) = - W^{0s}(-v)$.

We define a vector $w \in SX$ to be {\em asymptotic to $v \in SX$} if
$w \in W^{0s}(v)$. Since $X$ has continuous asymptote, being
asymptotic is an equivalence relation (see \cite[Prop. 3]{Es}). We write
$v \sim w$ for asymtotic vectors $v,w \in SX$. Note
that a flow line $\phi^\R(v_1)$ can intersect a leaf $W^u(v_2)$ in at
most one vector, since the footpoint sets of these leafs are level
sets of Busemann functions and $b_v(\pi(\phi^t(w))) = b_v(\pi(w)) -t$
for asymptotic vectors $v,w \in SX$.

\begin{lemma} \label{lem:WuWueq}
  Let $v, v' \in SX$ with $\det D(v) \neq 0$. Assume that $W^u(v) = W^u(v')$
  and $v' \in W^{0s}(v)$. Then $v = v'$.
\end{lemma}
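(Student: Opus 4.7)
The strategy is to realize both $\pi v$ and $\pi v'$ as critical points of the restricted Busemann function $f := b_v\big|_{\mathcal H}$, where $\mathcal H := \pi W^u(v) = \{b_{-v} = 0\}$, and then use the non-degeneracy of $\det D(v)$ together with a Morse-theoretic argument on $\mathcal H$ to force the two critical points to coincide. The hypothesis $v' \in W^u(v)$ says that $q := \pi v'$ lies on $\mathcal H$ and that $v' = \grad b_{-v}(q)$ is its unit normal, while $v' \in W^{0s}(v)$ says that $v' = -\grad b_v(q)$. Together they give $\grad b_v(q) = -v'$ normal to $\mathcal H$, so $q$ is a critical point of $f$; the same argument applied to $v$ itself shows $q_0 := \pi v$ is also a critical point of $f$.

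A direct computation with the Gauss formula for the Hessian of a restricted function, using the standard identities $\mathrm{Hess}(b_v)|_{(v'')^\perp} = -S(v'')$ and $\nabla_X v'' = U(v'') X$ for $X \in (v'')^\perp$ (with $v''$ extended near $\pi v''$ as the unit normal field $\grad b_{-v}$ to $\mathcal H$), yields at any critical point $\pi v''$ (necessarily coming from some $v'' \in W^u(v) \cap W^{0s}(v)$)
\[
\mathrm{Hess}(f)\big|_{\pi v''} \;=\; U(v'') - S(v'') \;=\; D(v'').
\]
By the constancy of $\det D$ on the unstable leaf $W^u(v)$ established in Section \ref{sec:const(un)stable}, $\det D(v'') = \det D(v) \ne 0$; since $D(v'') \ge 0$ is symmetric and invertible, it is positive definite. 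Hence every critical point of $f$ is a nondegenerate strict local minimum.

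The horosphere $\mathcal H$ is diffeomorphic to $\R^{n-1}$. A smooth function on a connected, simply connected manifold whose critical points are all nondegenerate local minima can have at most one critical point: two distinct such minima would, via a mountain-pass / minimax construction along paths connecting them in $\mathcal H$, produce a further critical point whose Morse index is at least $1$, hence whose Hessian is not positive definite --- contradicting the previous paragraph. Thus $q = q_0$, i.e.\ $\pi v = \pi v'$; and since $\pi$ restricts to a bijection $W^u(v) \to \mathcal H$ (each point of $\mathcal H$ carries a unique forward unit normal in $W^u(v)$), we conclude $v = v'$.

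The main obstacle is to make the mountain-pass step rigorous on the noncompact horosphere $\mathcal H$: the minimax value must actually be attained by a critical point, which requires a Palais--Smale-type compactness condition or a coercivity statement for $f$ at infinity in $\mathcal H$. Such coercivity can be derived from the uniform bound $D \ge \rho \cdot \id$ on $W^u(v)$ (furnished by the first assertion of Theorem \ref{thm:Dconstant}) together with the exponential decay estimates of Proposition \ref{prop:SUest}(b), which control the growth of $b_v$ along $\mathcal H$.
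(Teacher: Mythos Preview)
The paper takes a completely different and much shorter route. From $W^u(v)=W^u(v')$ one has $-v\sim -v'$ (since $W^u(w)=-W^s(-w)$), and $v'\in W^{0s}(v)$ gives $v\sim v'$; so $v$ and $v'$ are bi-asymptotic. If $v\ne v'$ they do not lie on the same flow line (a flow orbit meets an unstable leaf in at most one point), and Eschenburg \cite[Thm.~1(iv)]{Es} then produces a nonzero bounded (central) Jacobi field along $c_v$, giving $\ker D(v)\ne 0$, contrary to $\det D(v)\ne 0$. No Morse theory, no compactness issues, and no forward references.

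Your approach is geometrically appealing and the Hessian identity $\mathrm{Hess}(b_v|_{\mathcal H})|_{\pi v''}=D(v'')$ is correct, but there are genuine gaps beyond the one you flag. First, the Palais--Smale step: Proposition~\ref{prop:SUest}(b) bounds Jacobi tensors along geodesics \emph{orthogonal} to $\mathcal H$, not the growth of $b_v$ \emph{along} $\mathcal H$; extracting coercivity of $f$ from those estimates would be a nontrivial lemma in its own right, and you have not supplied one. Second, a critical point of $f$ at $q$ only says $\grad b_v(q)=\pm\grad b_{-v}(q)$; your Hessian computation covers the ``$-$'' case, while the ``$+$'' case (equivalent to $v\sim -v$) yields a degenerate Hessian and has not been excluded, so the mountain-pass point need not have index $\ge 1$. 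Third, invoking Theorem~\ref{thm:Dconstant} here is circular, since Lemma~\ref{lem:WuWueq} is used in its proof. The last point is repairable (Sections~\ref{sec:constflow}--\ref{sec:const(un)stable} already give $\det D\equiv\det D(v)$ on $\phi^{\R}(W^u(v))$, hence a uniform lower eigenvalue bound there), but the first two are real obstacles.
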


\begin{proof}
  $v' \in W^{0s}(v)$ implies that $v$ and $v'$ are asymptotic. Since
  $W^u(v) = - W^s(-v)$, $W^u(v) = W^u(v')$ implies that $-v$ and $-v'$
  are also asymptotic. Therefore, $v$ and $v'$ are bi-asymptotic.
  We have $v' \not\in \phi^{\mathbb R}(v)$, since both $v$ and $v'$
  lie in the same unstable manifold $W^u(v)$. By \cite[Thm. 1]{Es}(iv),
  there exists a central Jacobi field along $c_v$, i.e., $\ker D(v) \neq
  0$. But this contradicts to $\det D(v) \neq 0$.
\end{proof}

The assumption $\Vert R \Vert \le R_0$ implies that the
intrinsic sectional curvatures of all horospheres are also uniformly
bounded in absolute value, by the Gauss equation. Therefore, there
exists $\delta > 0$ such that for all horospheres $\mathcal H$ and
all $p \in {\mathcal H}$, the intrinsic exponential map
$\exp_{p,{\mathcal H}}: T_p{\mathcal H} \to {\mathcal H}$ is a
diffeomorphism on the ball $B_{p,{\mathcal H}}(\delta) = \{ v \in
T_p{\mathcal H} \mid \Vert v \Vert < \delta \}$.

Assume that $n = {\rm dim}\, X$. Let $v \in SX$ be a fixed vector with
$\det D(v) \neq 0$. Now, we define the following continuous map (see
Figure \ref{fig:varphimap})
$$ \varphi_v: X \times B_\delta(0) \to SX, $$
where $B_\delta(0) = \{ y \in  \R^{n-1} \mid \Vert y \Vert < \delta \}$: Choose
a smooth global orthonormal frame $Z_1 = -\grad b_v, Z_2, \dots, Z_n$
on $X$. Define
$$ \varphi_v(q,y) =  \psi^u_{Z_1(q)} \left[ \exp_{q,\pi W^u(Z_1(q))}\left(
\sum_{i=2}^n y_i Z_i(q) \right) \right] \in W^u(Z_1(q)),$$
where $\psi^u_w: \pi W^u(w) \to W^u(w)$ is defined by $\psi^u_w(q) = 
\grad b_{-w}(q)$.

  \begin{figure}[h]
  \begin{center}
    \psfrag{v}{$v$} 
    \psfrag{Wu(Z1(q))}{$W^u(Z_1(q))$}
    \psfrag{pv(q,y)}{$\varphi_v(q,y)$}
    \psfrag{Z2(q)}{$Z_2(q)$}
    \psfrag{Z3(q)}{$Z_3(q)$}
    \psfrag{q}{$q$}
    \psfrag{p}{$p$}
    \psfrag{Z1(q)=-gradbv(q)}{$Z_1(q)=-\grad b_v(q)$}
    \psfrag{ym}{$y_m$}
         \includegraphics[width=10cm]{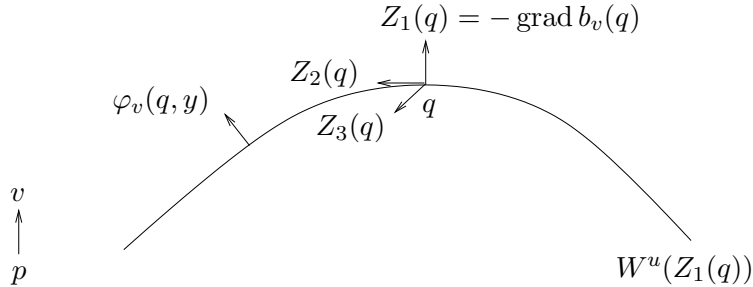}
  \end{center}
  \caption{Illustation of the map $\varphi_v: X \times B_\delta(0) \to SX$}
  \label{fig:varphimap}
  \end{figure}

We now show that $\varphi_v$ is injective: Let $\varphi_v(q,y) = \varphi_v(q',y')$. 
Then $W^u(Z_1(q)) =W^u(Z_1(q'))$ and 
$$ Z_1(q') = - \grad b_v(q') \sim v \sim -\grad b_v(q) = Z_1(q), $$
which implies $Z_1(q') \in W^{0s}(Z_1(q))$.
We conclude from the previous subsections that $\det D(Z_1(q)) = \det D(v) \neq 0$. 
Using Lemma \ref{lem:WuWueq}, we obtain $Z_1(q) = Z_1(q')$, i.e., $q=q'$. The equality
$y=y'$ follows now from the injectivity of the exponential maps and $\psi^u_w$.

Since $\dim X \times B_\delta(0) = 2n-1 = \dim SX$, we conclude that $U = \varphi_v(X \times 
B_\delta(0)) \subset SX$ is an open neighborhood of $v$, by Brouwer's domain invariance.
Moreover, $\det D(w) = \det D(v) \neq 0$ for all $w \in U$, using that $\det D$ is constant along
unstable manifolds, as well.

Now we are able to prove Theorem \ref{thm:Dconstant}.

\begin{proof}
  Without loss of generality, we assume that there exists
  a vector $v_0 \in SX$ with $\det D(v_0) = \alpha \neq 0$. Let $SX_\alpha = \{ w
  \in SX \mid \det D(w) = \alpha \}$. By continuity of $w \mapsto \det
  D_w$, the set $SX_\alpha \subset SX$ is closed. Since $v_0 \in
  SX_\alpha$, we know that $SX_\alpha$ is non-empty. The above
  arguments and Sections \ref{sec:constflow} and
  \ref{sec:const(un)stable} show for every vector $v \in SX_\alpha$
  that the open neighbourhood $\varphi_v(U)$ is contained in
  $SX_\alpha$, i.e., $SX_\alpha$ is open. Since $SX$ is connected, we
  conclude that $SX_\alpha = SX$.
  
  Since $\Vert R \Vert \le R_0$ implies that $X$ has
  bounded sectional curvature, the second fundamental forms of
  horospheres are bounded and therefore the eigenvalues of the positive
  endomorphism $D(v) = U_{v}'(0) - S_{v}'(0)$ are also uniformly bounded
  from above.The rank one assumption implies $\det D(v) = {\rm const} >
  0$. Both facts together imply that the smallest eigenvalue of $D(v)$ 
  is uniformly bounded from below by a constant $\rho > 0$.
\end{proof}

\section{Proof of the equivalences}

From now on, we assume that $(X,g)$ is asymptotically harmonic with
$\Vert R \Vert \le R_0$ and $\Vert \nabla R \Vert \le R_0'$. Our goal
is to prove Theorem \ref{thm:equivalences}. We prove each of the
implications separately.

\subsection{Rank one implies Anosov geodesic flow}

Observe first that $h = 0$ implies $\tr D(v) = \tr U(v) - \tr S(v) =
h-h = 0$.  Since $D(v)$ is positive, this implies $D(v) = 0$ and $\det
D(v) = 0$ which contradicts to $\rank(X) = 1$. Now we assume that
${\rm rank}(X) = 1$ and, therefore, $D(v) \ge \rho > 0$, by Theorem
\ref{thm:Dconstant}. By \cite[Theorem, p. 107]{Bo} this implies that
the geodesic flow is Anosov.

\subsection{Anosov geodesic flow implies Gromov hyperbolicity}

Recall that a geodesic metric space is called {\em Gromov hyperbolic}
if there exists $\delta > 0$ such that every geodesic triangle is
$\delta$-thin, i.e., every side of the triangle is contained in the
union of the $\delta$-tubular neighborhoods of the other two sides.

Assume now that the geodesic flow $\phi^t:SX \to SX$ is Anosov with
respect to the Sasaki metric. For $v \in SX$ consider the normal
Jacobi tensor along $c_v$ with $A_v(0) = 0$ and $A_v'(0) = \id$. Then
the Anosov property implies (see \cite[p. 113]{Bo})
$$
\|A_v(t)x \|\ge c e^{\alpha t} \Vert x \Vert
$$
for $t \ge 1$ with suitable constants $c, \alpha > 0$.
Consider two distinct geodesic rays $c_1:[0, \infty) \to X$ and
$c_2:[0, \infty) \to X$ with $c_1(0) =c_2(0) = p$ and define
\begin{eqnarray*}
d^q_t(c_1(t), c_2(t)) &:= &\inf \{ L(\gamma) \mid  \;
\gamma:[a,b] \to  X \setminus B(p,t) \\
&& \text{ a piecewise smooth curve
joining } \; c_1(t) \text{ and } c_2(t) \},
\end{eqnarray*}
where $B(p,t) = \{ q \in X \mid d(p,q) < t \}$. Let $t \ge 1$ and
$\gamma: [0,1] \to X \backslash B(p,t)$ be a curve connecting $c_1(t)$
and $c_2(t)$. Let $v_1 = c_1'(0) \in S_pX$ and $v_2 = c_2'(0) \in
S_pX$. Then $\gamma(s) = \exp_p(r(s)v(s))$ with $r: [0,1] \to [t,
\infty)$ and $v: [0,1] \to S_pX$ and
\begin{eqnarray*}
\gamma'(s) &=&  D\exp_p(r(s)v(s))\left(r'(s)v(s)+r(s)v'(s)\right) \\
&=& r'(s) c_{v(s)}'(r(s)) + A_v(r(s))v'(s).
\end{eqnarray*}
Since $c_{v(s)}'(r) \bot A_v(r)v'(s)$, we conclude that
$$ \Vert \gamma'(s) \Vert \ge c e^{\alpha r(s)} \Vert v'(s) \Vert. $$
This implies that
$$ L(\gamma) = \int_0^1 \Vert \gamma'(s) \Vert ds \ge c e^{\alpha t} 
\angle(v_1,v_2), $$
and therefore
\begin{equation} \label{eq:expdiv}
\liminf_{t \to \infty} \frac { \log d^q_t(c_1(t), c_2(t))}{t} \ge c_0 \alpha
\end{equation}
with a suitable constant $c_0>0$. This implies, using \cite[Chapter III, 
Prop. 1.26]{BH} that $X$ is Gromov hyperbolic. (Note that the condition
there is $\liminf_{t \to \infty} \frac{d^q_t(c_1(t), c_2(t))}{t} = \infty$, which
is a priori weaker than \eqref{eq:expdiv}. In fact, both conditions are
equivalent to Gromov hyperbolicity, see \cite[Chapter III, Prop. 1.25]{BH}.)
  
\subsection{Gromov hyperbolicity implies purely exponential
  volume growth with $h=h_{vol}$}

We like to note first that simply connected Riemannian manifolds
$X$ without conjugate points which are Gromov hyperbolic spaces
{\em admitting compact quotients} have purely exponential volume
growth (see \cite[Thm. 7.2]{Coor}). Here we consider the special case
of an asymptotic harmonic manifold without the additional assumption
that $X$ admits a compact quotient.

\begin{dfn}
  Let $X$ be a Riemannian manifold with $h_{vol} = h_{vol}(X) >
  0$. Then $X$ has {\em purely exponential volume growth with
    growth rate $h_{vol}$} if, for every $p \in X$, there exists a
  constant $C = C(p) \ge 1$ with
  $$ \frac{1}{C} e^{h_{vol} r} \le \vol B_r(p) \le C e^{h_{vol} r} 
  \quad \text{for all $r \ge 1$.} $$ 
\end{dfn}

We first prove the following general lemma.

\begin{lemma} \label{lem:Gromovexpvol}
  Let $X$ be a $\delta$-hyperbolic space without conjugate points and
  bounded curvature. Then the volume of any geodesic sphere
  grows exponentially. In particular, we have $h_{vol}(X) > 0$.
\end{lemma}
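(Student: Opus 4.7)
My plan is to exhibit, for every sufficiently large $r$, a collection of at least $c e^{\alpha r}$ points in the spherical shell $B_{r+1}(p)\setminus B_{r-1}(p)$ that are pairwise at distance $\ge 2$. Because $\Vert R\Vert\le R_0$ and $X$ has no conjugate points, $\exp_q$ is a diffeomorphism whose Jacobian is bounded below on a unit ball by Rauch comparison, so $\vol B_1(q)\ge v_0=v_0(R_0)>0$ uniformly in $q\in X$. A packing of the above type then yields $\vol B_{r+2}(p)\ge v_0 c e^{\alpha r}$, which gives exponential growth of sphere volumes and in particular $h_{vol}(X)>0$.

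\noindent\textbf{Boundary side.} To construct such a packing I would work with the ideal boundary $\partial X$ of the proper geodesic $\delta$-hyperbolic space $X$, equipped with a visual metric $\rho_p$ satisfying $\rho_p(\xi,\eta)\asymp e^{-\epsilon(\xi|\eta)_p}$ for some sufficiently small $\epsilon>0$, where $(\xi|\eta)_p$ is the Gromov product at $p$. Since $X$ is a simply connected complete Riemannian manifold without conjugate points of dimension $n\ge 2$, the map $v\mapsto [c_v]$ identifies $\partial X$ with $S_pX\cong S^{n-1}$, so $\partial X$ has topological dimension $n-1\ge 1$. By Szpilrajn's theorem $\dim_H(\partial X,\rho_p)\ge n-1>0$, and a standard covering argument therefore produces, for every $T\ge 1$, boundary points $\xi_1,\dots,\xi_{N(T)}$ with $N(T)\ge c_1 e^{\epsilon(n-1)T}$ whose pairwise Gromov products satisfy $(\xi_i|\xi_j)_p\le T$.

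\noindent\textbf{From boundary to $S_r(p)$ and main obstacle.} Let $v_i\in S_pX$ be the initial vector of the ray asymptotic to $\xi_i$. Thin-triangle estimates in a $\delta$-hyperbolic space show that the rays $c_{v_i}$ stay within $2\delta$ of one another up to time approximately $T$ and then diverge at roughly linear speed, so there is $C_0=C_0(\delta)$ such that at time $r=T+C_0$ the points $c_{v_i}(r)\in S_r(p)$ are pairwise at distance $\ge 2$. The balls $B_1(c_{v_i}(r))$ then form a disjoint family inside $B_{r+1}(p)\setminus B_{r-1}(p)$ of total volume $\ge v_0 N(T)\ge c_2 e^{\alpha r}$ with $\alpha=\epsilon(n-1)>0$, finishing the argument. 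The main obstacle I foresee is carefully setting up the visual boundary and visual metric in this noncompact-quotient setting and verifying the splitting-time estimate that converts a Gromov-product bound on the boundary into the required metric $2$-separation on $S_r(p)$; the absence of cocompactness means Coornaert's framework cannot be used as a black box.
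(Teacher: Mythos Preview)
Your route through the visual boundary is genuinely different from the paper's, and the obstacle you flag is real. The paper avoids $\partial X$ entirely and uses only \emph{two} rays. By \cite[Chapter III, Prop.~1.25]{BH}, $\delta$-hyperbolicity is equivalent to exponential divergence of geodesics: for any two distinct rays $c_1,c_2$ from $p$ one has $\liminf_{t\to\infty}\frac{\log d^q_t(c_1(t),c_2(t))}{t}\ge c(\delta)>0$, where $d^q_t$ is the infimum of lengths of paths in $X\setminus B(p,t)$. Since the intrinsic distance in $S_p(t)$ dominates $d^q_t$, this gives $d_{S_p(t)}(c_1(t),c_2(t))\ge e^{c(\delta)t/2}$ for $t\ge t_0$. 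Along a minimizing geodesic in $S_p(t)$ joining $c_1(t)$ to $c_2(t)$ one places intrinsic $\tfrac14$-balls at integer parameter values; they are pairwise disjoint and there are at least $e^{c(\delta)t/2}-1$ of them. Bounded ambient curvature together with the uniform bound on second fundamental forms of large spheres (Lemma~\ref{lem:AAm1} plus the Gauss equation) gives uniform intrinsic curvature bounds on all $S_p(t)$, $t\ge t_0$, hence a uniform lower volume bound $A_0>0$ for these $\tfrac14$-balls. This yields $\vol S_p(t)\ge A_0(e^{c(\delta)t/2}-1)$ directly, with no boundary machinery and no packing--covering translation.

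The gap in your approach sits exactly where you put it. Injectivity of $v\mapsto[c_v]$ would require that two rays from $p$ at bounded Hausdorff distance coincide, and under only ``no conjugate points $+$ bounded curvature $+$ $\delta$-hyperbolic'' (no rank-one hypothesis, no nonpositive curvature, no cocompact action) this is not available; Bestvina--Mess type results identifying $\dim\partial X$ with $n-1$ likewise rely on cocompactness. Without $\partial X\cong S^{n-1}$, or at least $\dim_{top}\partial X\ge 1$, the Szpilrajn step does not start, and a continuous surjection from $S^{n-1}$ alone gives no lower bound on $\dim\partial X$. A smaller point: your packing lives in the shell $B_{r+1}(p)\setminus B_{r-1}(p)$ with ambient unit balls, so it delivers $h_{vol}(X)>0$ but not directly the sphere estimate $\vol S_r(p)\ge c\,e^{\alpha r}$ that the lemma actually asserts.
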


\begin{proof}
  Fix $p \in X$ and geodesic rays $c_1, c_2: [0,\infty) \to X$ with $c_1(0)=c_2(0)$.
  As remarked above, Gromov hyperbolicity implies
  $$
  \liminf_{t \to \infty} \frac { \log d^q_t(c_1(t), c_2(t))}{t} \ge c(\delta),
  $$
  where $c(\delta) > 0$ depends only on the Gromov constant $\delta$.
  In particular, there exists $t_0 > 0$ such that for all $t \ge t_0$
  $$ d_{S_p(t)}(c_1(t),c_2(t)) \ge e^{t c(\delta)/2}, $$
  where $d_{S_p(t)}$ is the intrinsic distance in the sphere $S_p(t) \subset X$.
  Let $\gamma_t : [0,l(t)] \to S_p(t)$ be a minimal geodesic in $S_p(t)$ connnecting
  $c_1(t)$ and $c_2(t)$. The $1/4$-balls in $S_p(t)$ with centers $\gamma_t(k)$
  and $k \in \Z \cap  [0,l(t)]$ are pairwise disjoint. Lemma \ref{lem:AAm1} implies that the
  second fundamental forms of $S_p(t)$ are bounded by a universal constant for
  all $t \ge t_0 > 0$. Using the Gauss equation, this implies that the curvatures of 
  the spheres $S_p(t)$ are uniformly bounded for $t \ge t_0$, as well. Therefore,
  the $1/4$-balls in $S_p(t)$ have a uniform lower volume bound $A_0 > 0$. Hence,
  we have
  $$ \vol(S_t(p)) \ge A_0 (e^{t c(\delta)/2} - 1) $$
  for all $t \ge t_0$. This finishes the proof of the lemma.
\end{proof}

\begin{lemma} \label{lem:lowexp} Let $(X,g)$ be an asymptotically
  harmonic manifold. Then, for all $p \in X$, there
  exists a constant $C_1(p) > 0$ such that
  $$ \frac{\vol S_r(p)}{e^{hr}} \ge C_1(p) \quad 
  \text{for all $r \ge 1$.}  $$
  In particular, we have
  $$ h \le h_{vol}(X). $$
\end{lemma}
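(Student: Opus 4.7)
The plan is to express the volume of geodesic spheres using the standard Jacobi tensor $A_v$ with $A_v(0)=0$, $A_v'(0)=\id$, via the classical formula
$$ \vol S_r(p) = \int_{S_pX} \det A_v(r)\, dv, $$
and to derive a pointwise lower bound $\det A_v(r) \gtrsim e^{hr}$ by a Riccati/monotonicity argument that uses the asymptotic harmonicity assumption in an essential way.

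The computational core will be the identity $S_{v,r}(t) = A_v(r-t)A_v^{-1}(r)$ recalled just after Lemma \ref{lem:AAm1}, which gives $S_{v,r}'(0) = -A_v'(r)A_v^{-1}(r)$. Differentiating the volume density along the radial direction I therefore get
$$ \frac{d}{dr} \log \det A_v(r) \;=\; \tr\bigl(A_v'(r) A_v^{-1}(r)\bigr) \;=\; -\tr S_{v,r}'(0). $$
Now I will invoke the monotonicity $S_{v,r}'(0) \nearrow S(v)$ as $r \to \infty$ (the same fact already used in the proof of Proposition \ref{prop:SUest}(b)), which implies $-\tr S_{v,r}'(0) \ge -\tr S(v)$ for all $r>0$. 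Combining the identity $S(v) = -U(-v)$ with the asymptotic harmonicity assumption $\tr U(w)=h$ for every $w \in SX$, I obtain $-\tr S(v) = \tr U(-v) = h$, so
$$ \frac{d}{dr} \log \det A_v(r) \ge h \quad \text{for all } r>0. $$

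Integrating from $1$ to $r$ gives $\det A_v(r) \ge \det A_v(1)\, e^{h(r-1)}$ for every $v \in S_pX$, and then integrating over $S_pX$ yields
$$ \vol S_r(p) \;\ge\; e^{-h} \vol S_1(p) \cdot e^{hr}, $$
which is the desired lower bound with $C_1(p) = e^{-h}\vol S_1(p) > 0$. For the consequence $h \le h_{vol}(X)$, I will integrate once more: since $\vol B_r(p) \ge \int_1^r \vol S_s(p)\,ds \ge C_1(p)\int_1^r e^{hs}\,ds$, in the case $h>0$ this gives $\vol B_r(p) \ge \frac{C_1(p)}{h}(e^{hr}-e^h)$, hence $\limsup_{r\to\infty}\frac{\log \vol B_r(p)}{r}\ge h$; and the case $h=0$ is trivial because $h_{vol}(X)\ge 0$ by definition.

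There is no serious obstacle: the estimate follows from a single Riccati-type integration once the sign bookkeeping between $S(v)$, $U(-v)$, and $h$ has been sorted out. The only place where care is required is to invoke the correct monotonicity direction of $S_{v,r}'(0)$ and to translate it into a uniform lower bound on the radial logarithmic derivative of the volume density; this uses the asymptotic harmonicity hypothesis but, remarkably, none of the deeper structural results of Section 2 or of Theorem \ref{thm:Dconstant}.
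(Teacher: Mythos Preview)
Your overall strategy --- establish the pointwise differential inequality $\frac{d}{dr}\log\det A_v(r) \ge h$, integrate from $1$ to $r$, then integrate over $S_pX$ --- is sound and does lead to the result. But the identity you rely on, $S_{v,r}'(0) = -A_v'(r)A_v^{-1}(r)$, is not correct in variable curvature. The relation $S_{v,r}(t) = A_v(r-t)A_v^{-1}(r)$ you quote from the paper is in fact imprecisely stated there: substituting $t \mapsto r-t$ into $A_v$ produces a tensor satisfying $Y''(t) + R_v(r-t)Y(t) = 0$ rather than the Jacobi equation $Y''(t) + R_v(t)Y(t) = 0$ along $c_v$, so in general $A_v(r-t)A_v^{-1}(r) \ne S_{v,r}(t)$. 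Geometrically, $S_{v,r}'(0)$ is the shape operator at $p$ of the sphere of radius $r$ centred at $c_v(r)$, whereas $A_v'(r)A_v^{-1}(r)$ is the shape operator at $c_v(r)$ of the sphere of radius $r$ centred at $p$; there is no reason for these to agree (even up to sign) when the curvature varies along $c_v$.

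The fix is immediate and preserves your argument: the correct identification is $A_v'(r)A_v^{-1}(r) = U_{\phi^r v,r}'(0)$ (the paper itself records this in Section~5). Then the monotonicity $U_{w,r}'(0) \ge U(w)$ together with asymptotic harmonicity yields $\tr\bigl(A_v'(r)A_v^{-1}(r)\bigr) = \tr U_{\phi^r v,r}'(0) \ge \tr U(\phi^r v) = h$, exactly the inequality you need. For comparison, the paper's own proof takes a parallel but slightly different route: it uses the exact identity $\det A_v(r)/e^{hr} = 1/\det(U(v)-S_{v,r}'(0))$ (which follows from $A_v(t)(U(v)-S_{v,t}'(0)) = U_v(t)$ and $\det U_v(t)=e^{ht}$) and then the monotonicity $S_{v,r}'(0)\nearrow S(v)$ to see directly that $r \mapsto \det A_v(r)/e^{hr}$ is nondecreasing and hence bounded below by its value at $r=1$. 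Both arguments encode the same monotonicity; yours (once repaired) is arguably the more transparent one, and your deduction of $h \le h_{vol}(X)$ from the sphere estimate is fine.
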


\begin{proof}
  As in the proof of \cite[Cor. 25]{Kn2}, we have for all $v \in SX$
  $$ \frac{\det A_v(t)}{\det U_v(t)} = \frac{\det A_v(t)}{e^{ht}} = 
  \frac{1}{\det(U(v) - S_{v,t}'(0))}, $$ 
  which implies
  \begin{equation} \label{eq:vol/ehr} 
  \frac{\vol S_r(p)}{e^{hr}} = 
  \int_{S_pX} \frac{1}{\det(U(v) - S_{v,r}'(0))} d\theta_p(v). 
  \end{equation}
  Using $U(v) - S_{v,t_1}'(0) \ge U(v) - S_{v,t_2}'(0) > 0$ for all $0
  < t_1 < t_2$, we obtain
  $$ \frac{\vol S_r(p)}{e^{hr}} \ge 
  \int_{S_pX} \frac{1}{\det(U(v) - S_{v,1}'(0))} d\theta_p(v). $$ 
  Continuous asymptote implies the continuity of $v \mapsto U(v) -
  S_{v,1}'(0)$. This yields the existence of a constant $a > 0$ such
  that $ \det(U(v) - S_{v,1}'(0)) \ge a$ for all $v \in S_pX$ and
  implies the statement in the lemma.
\end{proof}

Recall the following result in \cite[Cor. 4.6]{Kn2}.

\begin{prop} \label{prop:coneinhoroball}
  Let $X$ be a simply connected $\delta$-hyperbolic manifold without
  conjugate points. Consider for $v \in S_pX$, $\ell = \delta +1$ and
  $r >0$ the spherical cone in $X$ given by
  $$
  A_{v, \ell}(r) := \{ c_w(t) \mid 0 \le t \le r , w \in S_pX, d(c_v(
  \pm \ell), c_w( \pm \ell)) \le 1 \}.
  $$
  Then, for $\rho= 4 \delta +2$ the set $A_{v, \ell}(r) $ is contained
  in
  \begin{equation*}
    \begin{split}
      H_{v,\rho}(r) :=\{ c_q(t) \mid & -\rho/2 \le t \le r , \; c_q \;
      \text{is an integral curve of}\\
      & \grad b_{-v}\; \text{with} \; c_q(0) = q \in b_{-v}^{-1}(0)
      \cap B(p,\rho) \}.
    \end{split}
  \end{equation*}
\end{prop}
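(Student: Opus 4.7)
The plan is to verify that any $q = c_w(t) \in A_{v,\ell}(r)$ lies in $H_{v,\rho}(r)$. This amounts to checking two things: (i) the Busemann value satisfies $b_{-v}(q) \in [-\rho/2, r]$, and (ii) the foot $q_0$ on the horosphere $b_{-v}^{-1}(0)$ of the integral curve of $\grad b_{-v}$ through $q$ lies in $B(p,\rho)$. Both statements will be extracted from the symmetric pair of closeness conditions $d(c_v(\pm\ell), c_w(\pm\ell)) \le 1$ combined with the $\delta$-thinness of geodesic triangles.

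For (i), the upper bound $b_{-v}(q) \le t \le r$ is immediate since $b_{-v}$ is $1$-Lipschitz with $b_{-v}(p) = 0$. For the lower bound I would first apply $\delta$-thinness to the geodesic triangle $p, c_v(\ell), c_w(\ell)$ whose third side has length at most $1$: this yields a Hausdorff bound $d(c_w(s), c_v([0,\ell])) \le \delta + 1$ for $s \in [0,\ell]$, hence $b_{-v}(c_w(s)) \ge s - (2\delta + 1) \ge -\rho/2$ in this range. To extend the bound to $s \in [\ell, r]$, I would combine $b_{-v}(c_w(\ell)) \ge \ell - 1 = \delta$ with the symmetric estimate $b_{-v}(c_w(-\ell)) \le -\ell + 1$: the geodesic $c_w$ has then made Busemann progress of at least $2\ell - 2$ over a parameter interval of length $2\ell$, and a further thin-triangle argument on the triangle with vertices $c_v(\ell), c_w(-\ell), c_w(s)$ controls how much $b_{-v}\circ c_w$ can drop for $s > \ell$.

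For (ii), I would apply the key Gromov-hyperbolic fact: two geodesics emanating from a common point whose other endpoints are close at infinity must fellow-travel for a long distance. Here the two geodesics are the reverse of $c_w|_{[0,t]}$ (from $q$ to $p$) and the reverse of the integral curve from $q_0$ to $q$, both emanating from $q$. Their asymptotic directions are $c_w(-\infty)$ and $c_{-v}(+\infty)$ respectively, and the backward closeness $d(c_v(-\ell), c_w(-\ell)) \le 1$ makes these two ideal points close in the visual metric at $p$. Consequently the Gromov product $(p \mid q_0)_q$ is bounded below by a quantity of order $\min(t, |b_{-v}(q)|) - O(\delta)$, and the identity $d(p, q_0) = d(p, q) + d(q_0, q) - 2(p \mid q_0)_q$ yields $d(p, q_0) \le 2\delta + (t - b_{-v}(q)) \le 4\delta + 2 = \rho$, using the estimate $t - b_{-v}(q) \le 2\delta + 1$ from (i).

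The main obstacle is the lower bound on $b_{-v}(c_w(s))$ for $s > \ell$: a priori $c_w$ could curve after time $\ell$ into a region where $b_{-v}$ is very negative. The backward closeness condition at $-\ell$ is essential because it rigidifies the direction of $c_w$ and, together with the forward closeness at $+\ell$, rules out such sharp turns via Gromov hyperbolicity. The tuned constants $\ell = \delta + 1$ and $\rho = 4\delta + 2$ are precisely calibrated so that all thin-triangle error terms combine correctly, and this bookkeeping is the substance of \cite[Cor.~4.6]{Kn2}.
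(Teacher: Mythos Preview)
The paper does not give its own proof of this proposition: it simply recalls the statement as \cite[Cor.~4.6]{Kn2} and immediately uses it. So there is no argument in the paper against which to compare your sketch; both you and the authors ultimately defer to \cite{Kn2} for the details.

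That said, your outline is sound in spirit but has a real gap in the step you flag as the ``main obstacle''. The crucial inequality is the linear lower bound $b_{-v}(c_w(s)) \ge s - (2\delta+1)$ for \emph{all} $s \ge 0$ (not merely $b_{-v}(c_w(s)) \ge -\rho/2$), since this is what you invoke at the end of (ii) as ``$t - b_{-v}(q) \le 2\delta+1$''. Your proposed triangle $c_v(\ell), c_w(-\ell), c_w(s)$ is not the natural one for this; a cleaner route is the ideal triangle with vertices $p$, $c_w(s)$, $c_v(-\infty)$, whose sides are $c_w|_{[0,s]}$, $c_v|_{(-\infty,0]}$, and the backward integral curve of $\grad b_{-v}$ from $c_w(s)$. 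Thinness forces $c_w(\ell)$ to lie within $\delta$ of the third side (the first option is excluded since $d(c_w(\ell),c_v(\ell))\le 1$ and $\ell=\delta+1$), and comparing Busemann values along that integral curve yields $b_{-v}(c_w(s)) \ge s-(2\delta+1)$. With this estimate in hand, both (i) and your Gromov-product computation in (ii) go through with the stated constants. Note also that this argument uses thinness of ideal triangles, which holds in Gromov hyperbolic spaces but with a constant that may differ slightly from $\delta$; the precise bookkeeping is indeed the content of \cite[Cor.~4.6]{Kn2}.
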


This useful result has the following consequence.

\begin{cor} \label{cor:hvolleqh}
  Let $(X,g)$ be a Gromov hyperbolic asymptotically
  harmonic manifold and $p \in X$. Then there exists
  a constant $C_2(p) > 0$ such that
  $$ \vol B_r(p) \le C_2(p) \int_{-\rho/2}^{r} e^{hs} ds, $$
  where $\rho$ is defined as in Proposition \ref{prop:coneinhoroball}.
  In particular, we have
  $$ h_{vol}(X) \le h. $$
\end{cor}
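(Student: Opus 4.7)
The plan is to compute the volume of the horospherical slab $H_{v,\rho}(r)$ using asymptotic harmonicity, then cover $B_r(p)$ by finitely many spherical cones $A_{v_i,\ell}(r)$ via compactness of $S_pX$, and finally use Proposition \ref{prop:coneinhoroball} to bound each cone by the corresponding slab.

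First I would set up horospherical coordinates on $H_{v,\rho}(r)$. Parametrize $F: (b_{-v}^{-1}(0) \cap B(p,\rho)) \times [-\rho/2,r] \to X$ by $F(q,t) = c_q(t)$, where $c_q$ is the integral curve of $\grad b_{-v}$ starting at $q$. For $q \in b_{-v}^{-1}(0)$, the vector $w(q) = \grad b_{-v}(q)$ lies in $W^u(v)$, and varying $q$ within the horosphere in an orthonormal direction $X \in T_q b_{-v}^{-1}(0)$ produces precisely the unstable Jacobi field along $c_q$ with initial value $X$ and initial derivative $U(w(q))X$. Thus the Jacobian of $F$ at $(q,t)$ equals $\det U_{w(q)}(t)$ when expressed in a parallel frame. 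Since $(X,g)$ is asymptotically harmonic,
\begin{equation*}
\frac{d}{dt} \log \det U_{w(q)}(t) = \tr\bigl( U_{w(q)}'(t)\, U_{w(q)}^{-1}(t)\bigr) = \tr U(\phi^t w(q)) = h,
\end{equation*}
so $\det U_{w(q)}(t) = e^{ht}$, and therefore
\begin{equation*}
\vol H_{v,\rho}(r) = \int_{b_{-v}^{-1}(0) \cap B(p,\rho)} \int_{-\rho/2}^{r} e^{ht}\, dt\, dA(q) \le A(v,\rho)\int_{-\rho/2}^{r} e^{hs}\, ds,
\end{equation*}
where $A(v,\rho) = \vol(b_{-v}^{-1}(0) \cap B(p,\rho))$.

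Next, I would use compactness to control the covering. Since the horospheres have second fundamental form uniformly bounded in norm by $\sqrt{R_0}$ (Lemma \ref{lem:AAm1}), and each intersects only the ball $B(p,\rho)$, the quantities $A(v,\rho)$ are uniformly bounded above by some constant $A_0 = A_0(R_0,\rho)$ depending on $p$. Moreover, the family of open caps $\{w \in S_pX : d(c_v(\pm \ell), c_w(\pm\ell)) < 1\}$, as $v$ ranges over $S_pX$, is an open cover of the compact space $S_pX$; extracting a finite subcover indexed by $v_1,\dots,v_N$ yields
\begin{equation*}
B_r(p) \subset \bigcup_{i=1}^{N} A_{v_i,\ell}(r),
\end{equation*}
since every point of $B_r(p)$ lies on some $c_w$ with $w \in S_pX$ and hence in some cap. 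Combining this with Proposition \ref{prop:coneinhoroball} and the volume estimate above,
\begin{equation*}
\vol B_r(p) \le \sum_{i=1}^{N} \vol H_{v_i,\rho}(r) \le N A_0 \int_{-\rho/2}^{r} e^{hs}\, ds =: C_2(p) \int_{-\rho/2}^{r} e^{hs}\, ds.
\end{equation*}

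Finally, to deduce $h_{vol}(X) \le h$: if $h > 0$, the right-hand side is bounded above by $\frac{C_2(p)}{h} e^{hr}$, so taking logarithms and dividing by $r$ gives $\limsup_{r\to\infty} \frac{\log \vol B_r(p)}{r} \le h$; if $h = 0$, the right-hand side grows linearly in $r$, so $h_{vol}(X) = 0 = h$. The main obstacle I anticipate is the volume-element identification — confirming that the Jacobian of the horospherical parametrization really is $\det U_{w(q)}(t)$ (requiring care with the sign conventions in $W^u$ and the direction of the normal to $b_{-v}^{-1}(0)$); the rest is a fairly clean combination of compactness, Proposition \ref{prop:coneinhoroball}, and the uniform bound on horosphere areas.
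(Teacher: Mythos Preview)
Your proof is correct and follows essentially the same route as the paper: cover $S_pX$ by finitely many caps, use Proposition~\ref{prop:coneinhoroball} to pass from cones to horospherical slabs, and compute $\vol H_{v,\rho}(r)$ via $\det U_{w(q)}(t)=e^{ht}$. The only superfluous step is the uniform bound $A_0$ on the horosphere patches---once you have extracted a \emph{finite} subcover $v_1,\dots,v_N$, you can simply take $C_2(p)=\sum_{i=1}^N A(v_i,\rho)$ as the paper does, without invoking any curvature control on the horosphere areas.
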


\begin{proof}
Let $p \in X$. Choose $l = \delta + 1$. Then we have
$$ S_pX = \bigcup_{v \in S_pX} U_{v,\ell}(r), $$
with the open sets
$$ U_{v,\ell}(r) = \{ w \in S_pX \mid d(c_v( \pm \ell), c_w( \pm \ell)) < 1 \}. $$
Since $S_pX$ is compact, we find finitely many vectors 
$v_1,\dots,v_k \in S_pX$ with
$$ S_pX = \bigcup_{j=1}^k U_{v_j,\ell}(r), $$
which implies for $\rho = 4 \delta + 2$
$$ B_r(p) \subset \bigcup_{j=1}^k A_{v_j,\ell}(r) \subset 
\bigcup_{j=1}^k H_{v_j,\rho}(r). $$
Using
$$ \vol(H_{v,\rho}(r) ) = \int\limits_{-\rho/2}^{r} e^{hs} ds 
\vol_{0}( b_v^{-1}(0) \cap B_\rho(p))
$$
where $\vol_{0}$ denotes the induced volume on the horosphere $b_v^{-1}(0)$,
we conclude
$$ \vol B_r(p) \le \left( \sum_{j=1}^k \vol_{0}( b_{v_j}^{-1}(0) \cap B_\rho(p)) 
\right) \int\limits_{-\rho/2}^{r} e^{hs} ds. $$
Setting $C_2(p) = \sum_{j=1}^k \vol_{0}( b_{v_j}^{-1}(0) \cap B_\rho(p))$
proves the first part of the corollary. The inequality $h_{vol}(X) \le h$ follows
then from the definition of $h_{vol}(X)$.
\end{proof}
 
Now we prove the implication claimed in this subsection.

\begin{prop}
  Let $(X,g)$ be a Gromov hyperbolic asymptotically harmonic space with with
  bounded curvature. Then $X$ has purely exponential volume growth with
  $h = h_{vol}$.
\end{prop}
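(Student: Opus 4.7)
The plan is to assemble the two main estimates already established in this subsection. Lemma \ref{lem:Gromovexpvol} shows that Gromov hyperbolicity together with bounded curvature forces $h_{vol}(X) > 0$. Combining Lemma \ref{lem:lowexp} (which yields $h \le h_{vol}(X)$ for any asymptotically harmonic manifold) with Corollary \ref{cor:hvolleqh} (which yields $h_{vol}(X) \le h$ under the additional Gromov hyperbolicity assumption) gives the identity $h = h_{vol}(X)$. In particular $h > 0$, so purely exponential growth at rate $h$ is a meaningful notion.

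For the upper bound on $\vol B_r(p)$, I would quote Corollary \ref{cor:hvolleqh} directly:
$$ \vol B_r(p) \le C_2(p) \int_{-\rho/2}^r e^{hs}\, ds \le \frac{C_2(p)}{h}\, e^{hr} $$
for all $r \ge 1$. For the lower bound, I would integrate the sphere-volume estimate $\vol S_s(p) \ge C_1(p)\, e^{hs}$ (for $s \ge 1$) from Lemma \ref{lem:lowexp} via the coarea formula $\vol B_r(p) \ge \int_1^r \vol S_s(p)\, ds$, obtaining
$$ \vol B_r(p) \ge \frac{C_1(p)}{h}\bigl(e^{hr} - e^h\bigr). $$
For $r$ large enough (say $r \ge r_0 := 1 + \tfrac{\log 2}{h}$), the right hand side is at least $\tfrac{C_1(p)}{2h} e^{hr}$, and for $r \in [1, r_0]$ a uniform positive lower bound on $\vol B_r(p)/e^{hr}$ is obtained by continuity and compactness. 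Choosing $C(p)$ large enough to dominate both regimes yields $\vol B_r(p) \ge \tfrac{1}{C(p)} e^{hr}$ for all $r \ge 1$.

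Since every ingredient is already available, there is no real obstacle here; the proposition is essentially a packaging statement that joins Lemmas \ref{lem:Gromovexpvol} and \ref{lem:lowexp} with Corollary \ref{cor:hvolleqh}. The only small care needed is in the lower bound, where the coarea integration produces a term of the form $e^{hr} - e^h$ rather than $e^{hr}$; this is absorbed into the constant by treating small and large $r$ separately.
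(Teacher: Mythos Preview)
Your proposal is correct and follows essentially the same approach as the paper: both combine Lemma~\ref{lem:Gromovexpvol}, Lemma~\ref{lem:lowexp}, and Corollary~\ref{cor:hvolleqh} for $h=h_{vol}$ and the upper bound, and both derive the lower bound by integrating the sphere estimate from Lemma~\ref{lem:lowexp}. The only cosmetic difference is that the paper integrates over $[r-1,r]$ rather than $[1,r]$, which yields $\vol B_r(p)/e^{hr} \ge C_1(p)/e^h$ directly for all $r \ge 2$ and so avoids your separate treatment of small and large $r$.
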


\begin{proof}
  Gromov hyperbolicity implies $h_{vol}(X) > 0$, by Lemma \ref{lem:Gromovexpvol}.
  Lemma \ref{lem:lowexp} and Corollary \ref{cor:hvolleqh} together yield $h = h_{vol}(X)$. 
  Moreover, we derive from Corollary \ref{cor:hvolleqh} that
  $$ \vol B_r(p) \le \frac{C_2(p)}{h} e^{hr}. $$ 
  The lower volume estimate
  follows from Lemma \ref{lem:lowexp}: For $r \ge 2$ we have
  $$ \frac{\vol B_r(p)}{e^{hr}} \ge \frac{\int_{r-1}^r \vol S_t(p) dt}{e^{hr}} 
  \ge \frac{\vol S_{t_0}(p)}{e^{hr}} \ge \frac{C_1(p)}{e^h}, $$ 
  for some $t_0 \in [r-1,r]$. This finishes the proof of purely exponential volume growth.
\end{proof}

\subsection{Purely exponential volume growth with $h=h_{vol}$
  implies rank one} Finally, we show the remaining implication of Theorem
\ref{thm:equivalences}. This closes the chain of implications and
finishes the proof that all four properties listed in (a), (b), (c)
and (d) are equivalent.

Assume that $(X,g)$ is asymptotically harmonic with purely exponential
volume growth $h = h_{vol}$.  We have
$$ \int_{r-1}^{r} e^{ht} \int_{S_pX} \frac{1}{\det(U(v)-S_{v,t}'(0))} d\theta_p(v) dt \le \vol(B_r(p)). $$
This implies
$$ \frac{1}{e} \int_{r-1}^{r} \int_{S_pX} \frac{1}{\det(U(v)-S_{v,t}'(0))} d\theta_p(v) dt \le \frac{\vol(B_r(p))}{e^{hr}} \le C(p) $$
for some constant $C(p) > 0$. Assume that $\det(U(v)-S_{v,t}'(0)) \to 0$ for all $v \in S_pX$. Then,
because of monotonicity and Dini, we know that this convergence is
uniform. This is in contradiction to the above inequality. Therefore,
there exist $v \in S_pX$ with $\det(U(v)-S(v)) \neq 0$ and $(X,g)$ has
rank one.

\section{Asymptotically harmonic manifolds with bounded asymptote}

The notion of bounded asymptote was first introduced by Eschenburg in \cite[Section 4]{Es}. Examples
of manifolds of bounded asymptote are manifolds with nonpositive curvature or, more generally, manifolds
with no focal points.

\begin{dfn}
  Let $(X,g)$ be a complete, simply connected Riemannian manifold without conjugate points.
  $X$ is called a manifold {\em of bounded asymptote} if there exists a constant $A \ge 1$ such
  that 
  \begin{equation} \label{eq:bdasym} 
  \Vert S_v(t) \Vert \le A \quad \forall\, t \ge 0, \quad \forall\; v \in SX. 
  \end{equation}
\end{dfn}

\begin{lemma}
  The bounded asymptote property \eqref{eq:bdasym} implies
  $$ \Vert U_v(t) \Vert \ge \frac{1}{A} \quad \forall\, t \ge 0, \quad \forall\; v \in SX. $$
\end{lemma}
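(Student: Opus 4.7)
The plan is to prove a time--reversal duality that identifies $U_v(t)^{-1}$ with $S_{-\phi^t(v)}(t)$, and then apply the bounded asymptote hypothesis to the reversed vector. Fix $v \in SX$ and $t \ge 0$, and set $w = \phi^t(v)$. Since $c_{-w}(s) = c_v(t-s)$, the two geodesics parametrize the same unoriented curve in opposite directions. An unstable Jacobi field along $c_v$ (decaying as $u \to -\infty$) is therefore the same object as a stable Jacobi field along $c_{-w}$ (decaying as $s \to +\infty$).

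The first step is to establish the operator identity
\[
U_v(t)\, S_{-w}(t) \;=\; \id \qquad \text{on } w^\perp.
\]
Given $x \in w^\perp = (-w)^\perp$, let $J(s) = S_{-w}(s)\, x$ be the stable Jacobi field along $c_{-w}$ with $J(0) = x$, and set $\tilde J(u) := J(t - u)$. Then $\tilde J$ is a Jacobi field along $c_v$ satisfying $\tilde J(u) \to 0$ as $u \to -\infty$; by uniqueness of the unstable subspace this forces $\tilde J(u) = U_v(u)\, \tilde J(0) = U_v(u)\, S_{-w}(t)\, x$. Evaluating at $u = t$ gives $x = \tilde J(t) = U_v(t)\, S_{-w}(t)\, x$, proving the identity.

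Since $X$ has no conjugate points, $U_v(t)$ is invertible for every $t$: a nontrivial kernel vector $x$ would produce a Jacobi field $U_v(\cdot)x$ vanishing at $t$ while $U_v(0) = \id$ is nonzero at $0$, contradicting the absence of conjugate points. The identity above therefore rewrites as $U_v(t)^{-1} = S_{-\phi^t(v)}(t)$. The bounded asymptote hypothesis \eqref{eq:bdasym}, applied to the unit vector $-\phi^t(v) \in SX$, then gives
\[
\Vert U_v(t)^{-1} \Vert \;=\; \Vert S_{-\phi^t(v)}(t) \Vert \;\le\; A,
\]
and combining this with the elementary inequality $\Vert U_v(t) \Vert \cdot \Vert U_v(t)^{-1} \Vert \ge 1$ yields $\Vert U_v(t) \Vert \ge 1/A$, as claimed.

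The only delicate point is the bookkeeping of base points in the composition $U_v(t)\, S_{-w}(t)$: the two factors live between perpendicular subspaces at different unit vectors, with $S_{-w}(t) : (-w)^\perp \to v^\perp$ and $U_v(t) : v^\perp \to w^\perp$. One uses the identification $w^\perp = (-w)^\perp$ inside $T_{c_v(t)}X$ to match the domains, and tracks the reparametrization $c_{-w}(s) = c_v(t-s)$ carefully so that the Jacobi equations along the two geodesics align. Once this is set up, the argument is elementary and no further input from the paper is needed.
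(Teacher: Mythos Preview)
Your overall strategy matches the paper's exactly: both arguments establish the identity $U_v(t)^{-1} = S_{-\phi^t(v)}(t)$ and then apply the bounded asymptote bound to $S_{-\phi^t(v)}(t)$. The difference is in how this identity is justified, and your justification has a genuine gap.

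You characterize the unstable Jacobi fields along $c_v$ as those ``decaying as $u \to -\infty$'', and correspondingly claim that the stable Jacobi field $J(s) = S_{-w}(s)x$ satisfies $J(s) \to 0$ as $s \to +\infty$. Neither statement is true for general manifolds without conjugate points. The simplest counterexample is Euclidean space (which is asymptotically harmonic with $h=0$ and has bounded asymptote with $A=1$): there $S_w = U_w \equiv \id$, so stable Jacobi fields are parallel and do not decay at all. The bounded asymptote hypothesis only gives $\Vert S_{-w}(s) \Vert \le A$, not decay, and even a boundedness characterization of the unstable subspace would require further argument.

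The fix is to avoid decay altogether and use instead the flow-invariance of the stable bundle, which is the content of the shift formula $S_{\phi^t v}(y) = S_v(y+t)\,S_v^{-1}(t)$ (obtained in the paper by letting $x \to \infty$ in \eqref{eq:Stransform}). Applying this with $v$ replaced by $-v$ and $t$ by $-t$, and using the elementary time-reversal identity $U_v(u) = S_{-v}(-u)$ (which follows directly from $U(v) = -S(-v)$), gives
\[
S_{-\phi^t v}(s) \;=\; S_{-v}(s-t)\,S_{-v}^{-1}(-t) \;=\; U_v(t-s)\,U_v^{-1}(t),
\]
and setting $s=t$ yields $S_{-\phi^t v}(t) = U_v^{-1}(t)$ with no appeal to asymptotic behaviour. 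From there your final paragraph goes through unchanged.
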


\begin{proof}
  Letting $x \to \infty$, we conclude from \eqref{eq:Stransform} 
  $$ S_{\phi^t v}(y)= S_v(y+t)S_v^{-1}(t). $$
  Using $S_v(t) = U_{-v}(-t)$, we obtain
  $$ S_{-\phi^t v}(s) = U_v(t-s)U_v^{-1}(t). $$
  This implies
  $$ 1 = \Vert S_{-\phi^t v}(t) U_v(t) \Vert \le A \; \Vert U_v(t) \Vert, $$
  finishing the proof.
\end{proof}

\begin{rmk}
  Rank one asymptotically harmonic manifolds with $\Vert R \Vert \le R_0$ and $\Vert \nabla R \Vert \le R_0'$
  are manifolds of bounded asymptote by Proposition \ref{prop:SUest}.
\end{rmk}

Next, we discuss relations between the geometrically defined constants $h, h_{vol}(X)$ and the {\em Cheeger constant} $h_{Cheeg}(X)$, defined as
$$ h_{Cheeg}(X) = \inf_{K \subset X} \frac{{\rm area}(\partial K)}{{\rm vol}(K)}, $$
where $K$ ranges over all connected, open submanifolds of $X$ with compact closure and smooth boundary.

\begin{prop} \label{prop:hvolhcheegh}
  Let $(X,g)$ be an asymptotically harmonic manifold. Then we have
  $$ h_{vol}(X), h_{Cheeg}(X) \ge h. $$
\end{prop}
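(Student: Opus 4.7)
The plan is to treat the two inequalities independently, since they use essentially disjoint pieces of the theory developed so far.

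The first inequality $h_{vol}(X)\ge h$ is already almost contained in Lemma \ref{lem:lowexp}. That lemma yields $\vol S_r(p)\ge C_1(p) e^{hr}$ for all $r\ge 1$. If $h>0$, integrating this in $r$ from $1$ to $R$ gives a lower bound $\vol B_R(p)\ge \frac{C_1(p)}{h}\left(e^{hR}-e^h\right)$ for $R\ge 1$, so taking $\limsup$ in the definition \eqref{eq:hvoldef} yields $h_{vol}(X)\ge h$. If $h=0$ the inequality is trivial.

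For $h_{Cheeg}(X)\ge h$ the strategy is a calibration/divergence-theorem argument: I will exhibit, on all of $X$, a smooth vector field $Y$ with $\Vert Y\Vert\equiv 1$ and $\operatorname{div}(Y)\equiv h$. Assuming such a $Y$ exists, for any relatively compact open $K\subset X$ with smooth boundary the divergence theorem gives
\[
h\cdot\vol(K)=\int_K \operatorname{div}(Y)\,dV=\int_{\partial K}\langle Y,N\rangle\, dA\le \area(\partial K),
\]
where $N$ is the outward unit normal and the last inequality uses Cauchy--Schwarz together with $\Vert Y\Vert=\Vert N\Vert=1$. Dividing by $\vol(K)$ and taking the infimum over all admissible $K$ then gives $h_{Cheeg}(X)\ge h$.

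The natural candidate is $Y=\grad b_v$ for any fixed $v\in SX$ (or its negative, depending on sign conventions). That $\Vert\grad b_v\Vert\equiv 1$ is standard in the absence of conjugate points. To compute $\operatorname{div}(Y)=\Delta b_v$, I use the general fact that for a smooth function with unit gradient the Laplacian at $q$ equals the mean curvature of the level set through $q$ with respect to the normal $\grad b_v(q)$. The level set through $q$ is a horosphere, which can be identified as $\pi W^s(v_q)$, where $v_q\in S_qX$ is the asymptote to $v$ at $q$, and one has $\grad b_v(q)=-v_q$. Combining this with the identity $S(-v)=-U(v)$ (a direct consequence of $U_{v}(t)=S_{-v}(-t)$), and with asymptotic harmonicity $\tr U(w)=h$, the mean curvature works out to a constant equal to $h$ in absolute value. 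After replacing $Y$ by $-\grad b_v$ if necessary, this delivers the required unit vector field with constant divergence $h$.

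The only real bookkeeping issue is checking signs carefully: one has to match the normal $\grad b_v$ used in the formula $\Delta b_v=\operatorname{div}(\grad b_v)$ against the normal orientation with respect to which $S(v_q)$, $U(v_q)$ are defined as second fundamental forms of $\pi W^s(v_q)$, $\pi W^u(v_q)$. Once those conventions are aligned (via $S(-v)=-U(v)$ and $\tr U\equiv h$), the constancy and correct sign of $\operatorname{div}(Y)$ follow immediately, and the divergence theorem does the rest. I expect this sign bookkeeping to be the only place a subtle error could creep in; the analytic content is otherwise routine.
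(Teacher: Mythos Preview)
Your proposal is correct and follows essentially the same route as the paper: the inequality $h_{vol}(X)\ge h$ is deduced from Lemma~\ref{lem:lowexp}, and $h_{Cheeg}(X)\ge h$ is obtained by applying the divergence theorem to $\grad b_v$ together with $\Delta b_v = h$ and $\Vert \grad b_v\Vert = 1$. The paper simply asserts $\Delta b_v = h$ rather than unpacking it via horosphere mean curvature and $S(-v)=-U(v)$ as you do, but the argument is the same.
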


\begin{proof} 
  The inequality $h_{vol}(X) \ge h$ was already stated in Lemma \ref{lem:lowexp}. For the proof of $h_{Cheeg}(X) \ge h$
  let $K \subset X$ be a set as described above. Choosing a Busemann function $b_v$, we have $\Delta b_v = h$ and
  obtain via Gauss' divergence theorem and $\Vert \grad b_v \Vert = 1$,
  $$ h \vol(K) = \int_K \Delta b_v(x) dx = \int_{\partial K} \langle \grad b_v,\nu \rangle dx \le {\rm area}(\partial K), $$
  where $\nu$ is the outward unit normal vector of $\partial K$ in $X$. 
\end{proof}

Even though we proved in the previous section that $h = h_{vol}(X)$ for Gromov
hyperbolic asymptotically harmonic spaces $X$ with bounded curvature, we do not know
whether this holds for general asymptotically harmonic manifolds. However, a sufficient condition
for $h = h_{vol}(X) = h_{Cheeg}(X)$ is that $X$ is asymptotically harmonic and has bounded
asymptote.

\begin{theorem}
  Let $(X,g)$ be asymptotically harmonic and of bounded asymptote. Then we have
  $$ h = h_{vol}(X) = h_{Cheeg}(X). $$
  In particular, this equality holds for all rank one asymptotically
  harmonic manifolds with $\Vert R \Vert \le R_0$ and $\Vert \nabla R
  \Vert \le R_0'$.
\end{theorem}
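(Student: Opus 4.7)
The plan is to prove the two upper bounds $h_{vol}(X) \le h$ and $h_{Cheeg}(X) \le h$, since Proposition~\ref{prop:hvolhcheegh} already provides the matching lower bounds. The final ``in particular'' clause is then immediate from the Remark preceding this theorem, which records that rank one asymptotically harmonic manifolds with $\Vert R\Vert\le R_0$ and $\Vert\nabla R\Vert\le R_0'$ are of bounded asymptote via Proposition~\ref{prop:SUest}(b).

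For $h_{vol}(X) \le h$ the heart of the matter is to convert the bounded asymptote hypothesis into an upper bound on the volume density $\det A_v(r)$. First I would establish the Jacobi tensor identity
$$ A_v(t) \;=\; U_v(t)\int_0^t U_v^{-1}(u)(U_v^{\ast})^{-1}(u)\,du, $$
which follows by checking the initial conditions $A_v(0)=0$, $A_v'(0)=\id$ and the Jacobi equation $A_v''+R_vA_v=0$; the latter uses the symmetry of $U_v'(t)U_v^{-1}(t)$ which comes from $U_v$ being Lagrangian. Since $\tr(U_v'(t)U_v^{-1}(t))=\tr U(\phi^t v)=h$ by asymptotic harmonicity, integration gives $\det U_v(t)=e^{ht}$, hence $\det A_v(r) = e^{hr}\,\det\int_0^r U_v^{-1}(u)(U_v^\ast)^{-1}(u)\,du$. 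Next, the transformation formula $U_{\phi^t v}(y)=U_v(y+t)U_v^{-1}(t)$ (proved analogously to \eqref{eq:Stransform}) evaluated at $y=-t$ yields $U_v^{-1}(t) = U_{\phi^t v}(-t)$, and combined with $U_w(-s)=S_{-w}(s)$ and the bounded asymptote hypothesis this gives $\Vert U_v^{-1}(u)\Vert \le A$ for all $u \ge 0$. Since the integrand is positive symmetric of operator norm at most $A^2$ and $\det M \le \Vert M\Vert^{n-1}$ for positive symmetric $(n{-}1)\times(n{-}1)$ matrices, we conclude $\det A_v(r) \le A^{2(n-1)} r^{n-1} e^{hr}$. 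Integrating over $v \in S_pX$ and then over $r$ produces $\vol B_r(p) \le C(p)\,r^{n-1}e^{hr}$ (with an obvious polynomial replacement if $h=0$), whence $h_{vol}(X) \le h$.

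For $h_{Cheeg}(X) \le h$, I would use that $d(p,\cdot)$ is smooth on $X\setminus\{p\}$ since $X$ is simply connected without conjugate points, so every $B_r(p)$ has smooth boundary $S_r(p)$ for $r>0$. Writing $V(r)=\vol B_r(p)$, the Cheeger constant applied to $K=B_r(p)$ gives
$$ h_{Cheeg}(X) \;\le\; \frac{\area(S_r(p))}{\vol B_r(p)} \;=\; (\log V)'(r) $$
for every $r>0$. If $(\log V)'(r)\ge h+\varepsilon$ were to hold on some tail $[r_0,\infty)$, then integration would force $V(r)\ge V(r_0)e^{(h+\varepsilon)(r-r_0)}$, contradicting the upper bound $V(r)\le Cr^{n-1}e^{hr}$ just proved. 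Hence $\liminf_{r\to\infty}(\log V)'(r)\le h$, which yields $h_{Cheeg}(X)\le h$. The main technical obstacle in this plan is precisely the pair consisting of the integral formula for $A_v$ and the bound $\Vert U_v^{-1}(u)\Vert\le A$: together they are the mechanism through which the bounded asymptote hypothesis is converted into exponential control of the volume density, and once they are in hand both remaining inequalities follow by elementary integration.
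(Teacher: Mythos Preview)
Your argument is correct. The treatment of $h_{vol}(X)\le h$ is essentially the paper's: the paper uses the identity
\[
\frac{\det A_v(r)}{e^{hr}}=\frac{1}{\det(U(v)-S_{v,r}'(0))}
\]
and bounds the denominator from below via $\langle (U(v)-S_{v,t}'(0))x,x\rangle \ge \frac{1}{A^2 t}$, obtaining exactly your bound $\det A_v(r)\le A^{2(n-1)}r^{n-1}e^{hr}$. Your integral representation $A_v=U_v\int_0^t(U_v^*U_v)^{-1}$ together with $\Vert U_v^{-1}\Vert\le A$ is the dual formulation of the same computation (indeed $\bigl(\int_0^r(U_v^*U_v)^{-1}\bigr)^{-1}$ and $U(v)-S_{v,r}'(0)$ have the same determinant).

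Your route to $h_{Cheeg}(X)\le h$ is genuinely different and more elementary. The paper shows the stronger statement $\lim_{r\to\infty}\frac{\vol S_r(p)}{\vol B_r(p)}=h$ via l'Hospital together with the uniform rate $\Vert U_{w,r}'(0)-U_w'(0)\Vert\le A^2/r$, which forces $\tr(A_v'(r)A_v^{-1}(r))\to h$ uniformly. Your contradiction argument uses only the polynomial-times-exponential upper bound on $\vol B_r(p)$ already established and yields $\liminf_{r\to\infty}(\log V)'(r)\le h$; this suffices for the Cheeger inequality and avoids the extra convergence estimate, at the cost of not producing the explicit limit.
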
 

\begin{proof}
  The bounded asymptote property implies that we have
  $$
  \frac{1}{A^2 t} \le \langle (U(v) - S_{v,t}'(0))x, x \rangle,
  $$
  for all unit vectors $x \in v^\bot$ (see the proof of \cite[Prop. 5.2]{Kn2}). 
  This implies
  $$ \det (U(v) - S_{v,t}'(0)) \ge \frac{1}{A^{2n-2} t^{n-1}}., $$
  and we obtain with \eqref{eq:vol/ehr}
  $$ \frac{\vol S_r(p)}{e^{hr}} \le \int_{S_pX} A^{2n-2} r^{n-1} 
  d\theta_p(v) = \omega_{n-1} A^{2n-2} r^{n-1}, $$ where
  $\omega_{n-1}$ is the volume of the Euclidean unit sphere of
  dimension $n-1$.This implies $\vol S_r(p) \le C r^{n-1} e^{hr}$ and,
  therefore, $h_{vol}(X) \le h$. Together with Lemma
  \ref{lem:lowexp} we obtain $h = h_{vol}(X)$.
  
  Next we prove $h_{Cheeg}(X) \le h$: Let $g(r) = \frac{\vol S_r(p)}{\vol B_r(p)}$.
  We will show that $g(r) \to h$ for $r \to \infty$ which implies $h_{Cheeg}(X) \le h$.
  We have with l'Hospital
  \begin{eqnarray*}
  \lim_{r \to \infty} g(r) &=& \lim_{r \to \infty} \frac{\int_{S_pX} \det A_v(r)\; d\theta_p(v)}{\int_0^r 
  \int_{S_pX} \det A_v(s)\; d\theta_p(v) \; ds} \\
  &=& \lim_{r \to \infty} \frac{\int_{S_pX} \tr (A_v'(r) A_v^{-1}(r)) \det A_v(r)\; d\theta_p(v)}
  {\int_{S_pX} \det A_v(r)\; d\theta_p(v)},
  \end{eqnarray*}
  provided the last limit exists. (We used the notion $d\theta$ for the canonical volume element 
  of the unit sphere $S_pX$). Note that $A_v'(r) A_v^{-1}(r) = U_{\phi^r v,r}'(0)$. Since 
  $\Vert U_{w,r}'(0) - U_w'(0) \Vert \le \frac{A^2}{r}$ (see, for instance, \cite[bottom of p. 686]{Kn2}),
  we conclude
  $$ 0 \le \tr U_{w,r}'(0) - h \le (n-1) \frac{A^2}{r} $$
  for all $w \in SX$ and $r \ge 0$. Therefore, $\tr (A_v'(r)
  A_v^{-1}(r)) \to h$ and the convergence is uniformly, which implies
  that the last limit above exists and is equal to $h$. This, together
  with Proposition \ref{prop:hvolhcheegh} above, implies that
  $h_{Cheeg}(X) = h$.
\end{proof}

\begin{remark}
  It was shown by Zimmer in the proof of \cite[Cor. 49]{Zi} that $
  h_{vol}(X) = h $ also holds in the case that $(X,g)$ is
  asymptotically harmonic {\em admitting compact quotients}. Equality
  of $h, h_{vol}(X)$ and $h_{Cheeg}(X)$ also holds for all {\em
    noncompact harmonic manifolds} $X$ without additional conditions
  (see \cite[Theorem 5.1]{PeSa}). Moreover, the agreement of these three
  geometric constants implies (see \cite[Corollary 5.2]{PeSa}) that the
  bottom of the spectrum and of the essential spectrum of the
  Laplacian $\Delta_X$ coincide and $\lambda_0(X) = \lambda_0^{\rm ess}(X) = \frac{h^2}{4}$.
\end{remark}

\end{document}